\documentclass{article}

\usepackage[english]{babel}

\usepackage[letterpaper,top=2cm,bottom=2cm,left=3cm,right=3cm,marginparwidth=1.75cm]{geometry}

\usepackage{amsmath}
\usepackage{amssymb}
\usepackage{amsthm}
\usepackage{graphicx}
\usepackage{enumitem}
\usepackage{tikz}
\usepackage[colorlinks=true, allcolors=blue]{hyperref}

\usepackage{xcolor}
\usepackage[all]{xy}

\newcommand\sbmattrix[4]{\textnormal{\scriptsize$\left(\begin{array}{cc}#1&#2\\#3&#4\end{array}\right)$\normalsize}}

\newtheorem{theo}{Theorem}
\newtheorem{prop}{Proposition}[section]
\newtheorem{lem}[prop]{Lemma}
\newtheorem{cor}{Corollary}[prop]
\theoremstyle{definition}
\newtheorem{ex}[prop]{Example}
\newtheorem{rem}[prop]{Remark}

\title{On some representations of Metacyclic 
groups whose integral forms can be computed 
from a single residual representation}
\author{
  Aguil\'o-Vidal, Bruno\\
  \texttt{bruno.aguilo@udp.cl}
  \and
    Arenas-Carmona, Luis\\
  \texttt{learenas@u.uchile.cl}
  \and
  Saavedra-Lagos, Mat\'ias\\
  \texttt{matias.saavedra.l@ug.uchile.cl}
}

\begin{document}
\maketitle

\begin{abstract}
 In a previous work we computed the number of integral forms, over its field of 
definition, of an irreducible representation of a dihedral group. 
Here we apply the theory of Bruhat-Tits buildings to 
give similar formulas for a wider family of metacyclic groups that have 
representations of arbitrarily large dimension. Occasionally, these formulas 
can be extended to groups containing a subgroup in that family. 
\end{abstract}

MSC-class: 11R33, 20C10 (Primary)  11R56 (Secondary).

Keywords: Integral representations, Bruhat–Tits buildings, Maximal orders.

\section{Introduction}

 The study of representations over fields of characteristic zero takes
 advantage of the highly simplified structure of the group algebra, 
 so computing the number and dimensions of the irreducible representations 
 of a group over such fields is relatively straightforward.
 When examining discrete systems (like crystal structures), integral 
 representations become important. This theory can be thought to be the
integral analog of the preceding one, but additional difficulties
appear soon in its development, as even the most basic tools, like
the capacity to decompose every representation as a direct sum of
irreducible representations, fall apart. Even in the simplest
example, the computation of the indecomposable $\mathbb{Z}[C_2]$-modules,
where $C_2$ is the group with two elements, produces objects
like the two-dimensional permutation module, with the non-trivial element 
of $C_2$ transposing the elements of a basis, that are at the same time
reducible and indecomposable. For this reason the study of integral 
representations has been considered for quite a long time part of the realm of 
number theory, see \cite{cli92}, \cite{HR62a}, \cite{Hoff16} or
\cite{Mill14} as examples. The results presented here add to 
this connection.

We follow the methodology of our previous work, see \cite{AAS}. 
Rather than building the module theory for the ring $\mathbb{Z}[G]$,
as we would for $K[G]$ when $K$ is a field, we rely on the 
existing classification of representations over a number field $K$, 
and from there we study the set of  integral 
representations conjugate to a given $K$-representation $\phi$.
They are called the integral forms of $\phi$. These integral forms
can be described in terms of a combinatorial object, the Bruhat-Tits
building, or BTT, which is a major tool for the study of maximal orders in 
matrix algebras. Note that an integral form is essentially a representation 
whose image falls in one particular maximal order.
The set of integral forms is closely linked to a subcomplex
called the branch of the representation. General tools for computing
branches have been the focus of some recent research, like in 
\cite{A13}, \cite{A16}, \cite{AS16} or \cite{AB19},
and it is our hope that they can shed some 
light on integral representation theory. In fact, this technique is
sufficient, in our opinion, to compute the integral forms for all
two dimensional irreducible representations, as we outlined in
\cite{AAS}, since the list of those groups is rather limited. This requires
writing the images of the generators as explicit matrices, so there is some
work to be done, especially for representations whose Schur index is $2$,
like the quaternion group $Q_8$. See \cite{AB26} for details on the latter. 
We also expect that the geometry of higher dimensional complexes proves helpful
in the general case, although a general method, as we presented in
the two dimensional case, still seems elusive to us.
However, cyclotomic field 
arithmetic plays a major role for the groups considered in this work, 
as is apparent in subsequent sections. This allows us to do
considerably more in their case.

 In all of this work we let 
$H$ be a group with a presentation
\begin{equation}\label{fmg}
H=H(N,M,k)=\langle a,b|a^M=b^N=e, bab^{-1}=a^k\rangle.
\end{equation}
We let $r$ be the order of the class of $k$ as an element of
the multiplicative group $(\mathbb{Z}/M\mathbb{Z})^*$. Then $r$
divides $N$, as $a=b^Nab^{-N}=a^{k^N}$.
This is a particular example of a metacyclic group, i.e.,
an extension of a cyclic group by a cyclic group. 
Equivalently, a metacyclic group is a 
group $G$ that has a cyclic normal subgroup $N$, such that the 
quotient $G/N$ is also cyclic. Not every metacyclic 
group has the form $H(N,M,k)$ as above, for instance the 
quaternion group $Q_8$ does not. However, this family includes 
the dihedral groups, and it is wide enough to include all groups 
for which our main result applies, so we keep this notation throughout.

Let $G$ be an arbitrary finite group, and let $K$
be a number field. Let
$\phi:G\rightarrow\mathrm{GL}_n(K)$ be an absolutely
irreducible $n$-dimensional representation of $G$
defined over $K$. A field of definition of $\phi$ is a minimal
field $K_0$ satisfying $\phi'(G)\subseteq\mathrm{GL}_n(K_0)$ for some
conjugate $\phi'$ of $\phi$. Any field of definition of $\phi$ contains
the set of images $\chi_\phi(G)$ of the character $\chi_\phi$ defined by
$\chi_\phi(g)=\mathrm{tr}\big(\phi(g)\big)$. Therefore, when 
$\mathbb{Q}\big(\chi_\phi(G)\big)$ is a field of definition, it is unique
and we call it the field of definition of $\phi$. In this case we say that
the Schur index of $\phi$ is $1$. More generally, the Schur index of a 
representation $\phi$ is $s$ when $\phi$ can be seen as a representation with 
values in a matrix algebra over an $s^2$-dimensional central division algebra,
see for example \cite[\S2.4]{GS17}.
By an integral form of $\phi$
we mean a $\mathrm{GL}_n(\mathcal{O}_K)$-conjugacy
class of representations that are $\mathrm{GL}_n(K)$-conjugates of $\phi$. 
Integral forms of a representation may fail to exist, see \cite{Se08}.
Finally, we let $\mathcal{G}_K$ denote the class group of $K$,
$\mathcal{G}_K(n)$ its maximal exponent $n$ subgroup, while
$h_K$ and $h_K(n)$ denote their respective orders.
The main result of this work
is the following:

\begin{theo}\label{t0}
    Let $G$ be a group containing a subgroup $H$ that
    is isomorphic to some group $H(N,M,k)$, as in Eq. (\ref{fmg}),
    and assume that the multiplicative order $r$ of $k$ modulo 
    $M$ equals $N$. Let $\phi:G\rightarrow\mathrm{GL}_n(K)$ be an
    absolutely irreducible representation of $G$
    whose restriction $\phi'$ to $H$ is 
    also absolutely
    irreducible. Assume $K$ is the field of
    definition of $\phi'$. Then the reduction modulo a fixed maximal 
    $\mathcal{O}_K$-ideal $\wp$ of every integral form of $\phi$ 
    has the same number $c_\wp$ of non-trivial invariant subspaces.
    Furthermore, the number of integral forms is 
    $h_K(n)c$, where $c\leq \prod_\wp(1+c_\wp)$.
    If $h_K(n)=1$, i.e., if $\mathrm{gcd}(n,h_K)=1$,
    then $c=\prod_\wp(1+c_\wp)$.
\end{theo}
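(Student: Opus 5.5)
The plan is to reduce the count of integral forms to a local–global computation with Bruhat–Tits buildings, as in \cite{AAS}. An integral form of $\phi$ is a $\mathrm{GL}_n(\mathcal{O}_K)$-class of conjugates whose image lies in $\mathbb{M}_n(\mathcal{O}_K)$. Equivalently, it is a class of full $\mathcal{O}_K$-lattices $\Lambda\subseteq K^n$ that are invariant under $\phi(G)$, taken up to the equivalence of lattices that differ by an element of the centralizer of $\phi(G)$ (which is $K^*$) acting through the genus. Concretely, I would proceed in three steps: fix an invariant lattice in each genus; describe the local invariant lattices at each $\wp$ up to homothety, which is the branch of $\phi$ in the BTT; and glue the local data through the class group.

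\textbf{Local step.} Fix $\wp$ and work over the completion $K_\wp$ with residue field $\mathbb{F}$. First I show that every $\phi(H)$-invariant $\mathcal{O}_{K_\wp}$-lattice $\Lambda$ has absolutely irreducible-like reduction structure, controlled by the hypothesis $r=N$. Since $K$ is the field of definition of $\phi'$ and $r=N$, the restriction $\phi'$ is induced from a character $\psi$ of $\langle a\rangle$ whose $N$ conjugates $\psi^{k^i}$ are distinct, so $n=N$. Over a suitable extension, $\phi(a)$ is diagonalizable with eigenvalues $\zeta^{k^i}$ and $\phi(b)$ permutes the eigenlines cyclically. The key is that, up to homothety, the invariant lattices form a \emph{path}-like or otherwise rigid subcomplex of the building. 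I would make this concrete with the following choice of lattices.
\begin{itemize}
\item Let $p$ be the residue characteristic.
\item Split $a$ into its $p$-part and prime-to-$p$ part.
\item The prime-to-$p$ part of $\phi(a)$ decomposes $\Lambda$ into eigen-sublattices over the ring of integers of a cyclotomic extension.
\item $\phi(b)$ permutes these eigen-sublattices, so an invariant lattice is determined by one eigen-component together with data for the $p$-part.
\end{itemize}
From this I would show that the branch is a finite tree (generically a single vertex) with a transitive structure. Every invariant lattice $\Lambda$ then has its neighbouring invariant lattices $\Lambda'$ with $\wp\Lambda\subsetneq\Lambda'\subsetneq\Lambda$ in bijection with the nontrivial invariant subspaces of $\Lambda/\wp\Lambda$. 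Homogeneity of the branch gives that this number $c_\wp$ does not depend on $\Lambda$. I would aim to show that the branch is a star, or more precisely that the invariant lattices up to homothety are exactly $\Lambda_0$ together with the $c_\wp$ lattices $\Lambda'$ above, using the cyclic permutation action to see that no longer chains exist. This gives exactly $1+c_\wp$ local classes, and $c_\wp=0$ for almost all $\wp$ (those not dividing $|G|$). Since $\phi(G)\supseteq\phi(H)$ and $\phi|_H$ is absolutely irreducible, the $G$-invariant lattices form a subset of the $H$-branch. Invariance of $c_\wp$ for $G$ follows from the same argument, because the invariant subspaces of the reduction of the $G$-representation are among those of the $H$-representation.

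\textbf{Global step.} Global invariant lattices correspond to choices of local invariant lattices at every $\wp$, almost all equal to a fixed one. Such a choice amounts to a local homothety class at each $\wp$, giving $\prod_\wp(1+c_\wp)$ possibilities, together with a global scaling by a fractional ideal $I$. Two lattices give the same integral form iff they are isomorphic as $\mathcal{O}_K$-modules via an element of $K^*$, and since $\Lambda\cong\mathcal{O}_K^{n-1}\oplus\det$, the Steinitz class obstruction is the key point. Multiplying by $I$ changes the Steinitz class by $[I]^n$, and changing local vertices changes it by some fixed classes. Freeness (i.e., the form being conjugate into $\mathrm{GL}_n(\mathcal{O}_K)$) requires trivial Steinitz class. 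The count therefore becomes: pairs (local vertex data, $I$ mod principal) with trivial Steinitz class, divided by homothety. The solutions in $I$ form a coset of $\mathcal{G}_K(n)$ when nonempty, which gives $h_K(n)c$ with $c\le\prod_\wp(1+c_\wp)$. When $\gcd(n,h_K)=1$, $n$-th powering is bijective on $\mathcal{G}_K$, so every vertex choice admits exactly one $I$, and equality holds.

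\textbf{Main obstacle.} I expect the main obstacle to be the local rigidity claim that the branch has no vertices at distance $\ge2$ from a central one (or, more generally, that it is homogeneous so that $c_\wp$ is constant), especially at primes $\wp$ dividing $M$ where the $p$-part of $a$ acts unipotently on the reduction. I would first try the explicit eigenbasis description over the cyclotomic ring, using the fact that totally ramified cyclotomic extensions make the relevant reduction a single Jordan-type block permuted by $b$.
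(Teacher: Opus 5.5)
Your global step is fine. Counting, for each maximal order containing $\phi(G)$, the classes $[I]\in\mathcal{G}_K$ with $[I]^n$ equal to a prescribed Steinitz class (a coset of $\mathcal{G}_K(n)$ when nonempty) is the Steinitz-class version of the paper's Proposition~\ref{p44}.

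The gap is in the local step. It carries all the content of the theorem, and you leave it as the ``main obstacle''. Worse, the structure you aim for is not the one needed. Showing that the branch is a star, i.e.\ that all invariant vertices are neighbours of one central $[\Lambda_0]$, does not make $c_\wp$ independent of the integral form. If two leaves are not adjacent to each other, the reduction at $\Lambda_0$ has two non-trivial invariant subspaces while the reduction at a leaf has one. Likewise, the number of local classes equals $1+c_\wp$ only when $c_\wp$ is computed at the centre. This is exactly what happens for $H(6,3,-1)$ in the paper's examples: there $r\neq N$ and the three orders form a path. Your justification for $G$ (its invariant subspaces are among those for $H$) has the same defect. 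The ``finite tree with a transitive structure'' description is also wrong: for $e_\wp(L/K)\geq 3$ the $H$-branch is a simplex with at least three vertices, not a tree.

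What is required, and what the paper proves as Theorem~\ref{t1}, is that the $\phi(H)$-invariant vertices are pairwise adjacent, i.e.\ lie in one simplex. Then every invariant vertex sees all the others as neighbours, so $1+c_\wp$ is the number of vertices whichever form you start from. The $\phi(G)$-invariant vertices, being a subset, form a face with the same property.

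The missing idea is how $r=N$ gets you there:
\begin{enumerate}
\item The condition $r=N$ forces $\nu=1$. By Lemma~\ref{l24} you may then identify $K^n$ with $L=K(\eta)$ so that $\phi(a)$ is multiplication by $\xi$ and $\phi(b)$ is the generator $\sigma$ of $\mathrm{Gal}(L/K)$.
\item Since $\mathcal{O}_L=\mathcal{O}_K[\xi]$, the invariant lattices are exactly the $\sigma$-stable fractional $\mathcal{O}_L$-ideals (Lemma~\ref{l22}).
\item Locally at $\wp$ these are, up to homothety, the powers $\tilde{\wp}^s$ with $0\leq s<e_\wp(L/K)$, where $\tilde{\wp}=\mathfrak{P}_1\cdots\mathfrak{P}_t$.
\item The chain $\wp\mathcal{O}_L=\tilde{\wp}^{e}\subset\tilde{\wp}^{e-1}\subset\cdots\subset\mathcal{O}_L$ shows they form a simplex (Proposition~\ref{p34}).
\end{enumerate}
Your eigen-decomposition sketch (prime-to-$p$ part splitting the lattice, $p$-part giving a Jordan-type block) might be pushed into a local version of this. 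As written, though, it neither reaches the description by Galois-stable ideals nor proves pairwise adjacency. So the first assertion of the theorem, and the identification of the local count with $1+c_\wp$ computed at an arbitrary integral form, remain unproved.
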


The product above is finite since $c_\wp>0$ for only a finite set of $T$ of
maximal ideals $\wp$. In fact, $c_\wp=0$ as soon as the $\mathcal{O}_\wp$-order 
spanned by the image $\phi(G)$ is maximal. 

Recall that the permutation representation for the symmetric group $S_n$ is
an $n$-dimensional representation defined in terms of the canonical basis by 
$\psi(\sigma)(\mathbf{e}_n)=\mathbf{e}_{\sigma(n)}$. It is easy to show that
this representation is the direct sum of two irreducible representations. The 
trivial representation and an $(n-1)$-dimensional irreducible representation
called the irreducible permutation representation. The preceding theorem implies
that the irreducible permutation representation for $S_p$, for a prime $p$, has
precisely two integral forms, as we show in Ex. \ref{ex55}.
This fact has been known for some time. More 
generally, it is known that the number of
integral forms of this representation for an arbitrary
symmetric group $S_n$ equals the
number of divisors of $n$. 
This was proved independently by Wilhelm
Plesken in \cite{Ple74}, Maurice
Craig in \cite{Cra76}, and reproved via
Bruhat-Tits building techniques 
by Walter Feit in \cite{Fei98}.
For the prime case, the unique invariant subspace
of the reduction modulo $p$ has either dimension $1$
or codimension $1$, depending on the integral form.
Theorem \ref{t0} also applies to the $3$-dimensional
representation of $\mathrm{PSL}(2,7)$. See Ex. \ref{ex56}.
It also applies to the group $H$ itself.

There are two facts that are behind Theorem \ref{t0}. One is a connection
between the integral representations of the group $H$
with the arithmetic of cyclotomic extensions. To be precise, for any extension
$L/K$ of number fields, we let $\mathcal{I}_L$ denote the ideal group of $L$,
and we identlfy the ideal group $\mathcal{I}_K$ of $K$ with a subset of 
$\mathcal{I}_L$ by identifying a fractional ideal $J$ with $\mathcal{O}_LJ$.
We let $\mathcal{P}_K$ denote the subgroup of principal fractional ideals in $K$,
while $\mathcal{Q}_{L/K}$ is the group of fractional ideals in $L$ whose $n$-th
powers belong to $\mathcal{P}_K$. In these terms, we prove the following result:

\begin{theo}\label{t2}
    Let $H=H(N,M,k)$ be a group presented as in Eq. (\ref{fmg}), 
    with $r=N$ as above. Let 
    $\phi:H\rightarrow\mathrm{GL}_n(K)$ be a
    faithful, absolutely irreducible representation of $H$,
    whose field of definition is $K$. Let $L=K(\eta)$, where
    $\eta$ is a primitive $M$-th root of unity. In this case, the
    constant $c_\wp$ defined in Theorem \ref{t0}, for $G=H$,
    is given the formula $c_\wp=e_\wp(L/K)-1$, where $e_\wp(L/K)$ denotes
    the ramification degree. Furthermore, in this case the number
    of integral forms equals the order of the quotient
    $\mathcal{Q}_{L/K}/\mathcal{P}_K$. In particular, the set of
    integral forms is not empty.
\end{theo}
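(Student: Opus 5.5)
Write $\zeta=\phi(b)$ and $\alpha=\phi(a)$. The plan is to identify the $K$-algebra spanned by $\phi(H)$ with a cyclic crossed product, and then to describe the integral forms as lattices over an explicit order.

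\textbf{Step 1: the algebra.} Since $r=N$ and $\phi$ is faithful and absolutely irreducible, Clifford theory shows $\phi$ is induced from a character of $\langle a\rangle$. So $n=N$, and $\alpha$ is diagonalizable over $\bar K$ with eigenvalues $\eta,\eta^k,\dots,\eta^{k^{N-1}}$. The subalgebra $K[\alpha]\subseteq \mathbb{M}_n(K)$ is then a field isomorphic to $L=K(\eta)$, of degree $n$ over $K$, hence a maximal commutative subalgebra. Conjugation by $\zeta$ restricts to the generator $\sigma:\eta\mapsto\eta^k$ of $\mathrm{Gal}(L/K)$, and $\zeta^N=1$. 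Therefore $K\phi(H)=\mathbb{M}_n(K)$ is the trivial cyclic algebra $(L/K,\sigma,1)$. Concretely, $K^n\cong L$ as a vector space, with $\alpha$ acting by multiplication by $\eta$ and $\zeta$ acting by $\sigma$.

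\textbf{Step 2: integral forms as lattices.} Integral forms of $\phi$ correspond to $\phi(H)$-stable full $\mathcal{O}_K$-lattices $\Lambda\subseteq L$, taken modulo $\mathrm{GL}_n(K)$-centralizer equivalence; since $\phi$ is absolutely irreducible, this means $\Lambda\sim c\Lambda$ for $c\in K^*$. The lattice $\Lambda$ must be invariant under multiplication by $\eta$, hence it is an $\mathcal{O}_K[\eta]=\mathcal{O}_L$-module, i.e.\ a fractional $\mathcal{O}_L$-ideal. It must also be $\sigma$-stable. Conversely, every $\sigma$-stable fractional ideal gives an integral form.

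For the conjugacy condition, two lattices $\Lambda,\Lambda'$ give the same $\mathrm{GL}_n(\mathcal{O}_K)$-class precisely when they are isomorphic as $\mathcal{O}_K$-modules compatibly with the action, that is, when $\Lambda'=c\Lambda$ with $c\in K^*$. There is one more requirement: as $\mathcal{O}_K$-modules they must be free, or more precisely isomorphic to $\mathcal{O}_K^n$, whose Steinitz class is trivial.

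I would handle the Steinitz class via the norm. The Steinitz class of $\Lambda$ is $N_{L/K}(\Lambda)$ times the Steinitz class of $\mathcal{O}_L$. The latter is trivial because $\mathcal{O}_L\supseteq\mathcal{O}_K[\eta]$ is free, which needs checking: it is the $\mathcal{O}_K$-lattice $\mathcal{O}_K^n$ translated via the cyclic basis. For a $\sigma$-stable ideal, $N_{L/K}(\Lambda)=\Lambda^n$ as ideals of $L$.

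\textbf{Step 3: counting.} Combining the two steps, the integral forms are the $\sigma$-invariant fractional ideals $\Lambda$ with $\Lambda^n\in\mathcal{P}_K$, modulo $K^*$, i.e.\ modulo $\mathcal{P}_K$. I still need to check that such $\Lambda$ are automatically $\sigma$-invariant, or else reconcile this with the definition of $\mathcal{Q}_{L/K}$. Since $\mathcal{O}_L\in\mathcal{Q}_{L/K}$, the set of integral forms is nonempty.

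\textbf{Step 4: local count $c_\wp$.} At $\wp$, the ambiguous ideals above $\wp$ are generated by $\mathcal{O}_{L,\wp}$ and the powers $\mathfrak{P}^j$ of the product of primes above $\wp$. Modulo $\wp$, these powers give the chain $\mathcal{O}_L\supset\mathfrak{P}\supset\dots\supset\mathfrak{P}^{e}=\wp\mathcal{O}_L$, with $e=e_\wp(L/K)$. The invariant subspaces of the reduction of $\Lambda$ correspond to the stable lattices strictly between $\wp\Lambda$ and $\Lambda$. These are exactly the $\mathfrak{P}^j\Lambda$ for $0<j<e$, which gives $c_\wp=e-1$.

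\textbf{Main obstacle.} The hard part is the $\mathcal{O}_K$-freeness and Steinitz-class bookkeeping in Step 2, together with showing that $\mathcal{O}_L$ is free over $\mathcal{O}_K$.
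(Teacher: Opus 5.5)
Your argument is correct, and it reaches the count by a more direct path than the paper.

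\textbf{How the paper argues.} The paper works with maximal orders.
\begin{enumerate}
\item It identifies the maximal orders containing $\phi(H)$ with $\mathcal{I}_L^{\mathrm{Gal}(L/K)}/\mathcal{I}_K$.
\item It shows that the order attached to $J$ is conjugate to $\mathbb{M}_n(\mathcal{O}_K)$ if and only if $JI\in\mathcal{Q}_{L/K}$ for some $I\in\mathcal{I}_K$. So there are $c=|\mathcal{Q}_{L/K}\mathcal{I}_K/\mathcal{I}_K|$ such orders.
\item It then invokes the $h_K(n)$-to-one map of Prop.~\ref{p44}, which rests on a normalizer computation, together with the index identity $|\mathcal{Q}_{L/K}/\mathcal{P}_K|=h_K(n)\,|\mathcal{Q}_{L/K}/\mathcal{I}_K(n)|$.
\item It reads off $c_\wp=e_\wp(L/K)-1$ from the simplex of Theorem~\ref{t1}.
\end{enumerate}

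\textbf{How your route differs.} You identify integral forms directly with free $\phi(H)$-stable lattices modulo $K^*$. This lands on $\mathcal{Q}_{L/K}/\mathcal{P}_K$ in one stroke, so Prop.~\ref{p44} and the index computation become unnecessary. You also use the classical Steinitz theorem in place of the paper's strong-approximation proof of Prop.~\ref{p52}. Your residual computation of $c_\wp$ (stable lattices between $\wp\Lambda$ and $\Lambda$ are the $\tilde\wp^{j}\Lambda$) has the same content as Prop.~\ref{p34}. What the paper's detour buys is the factorization $h_K(n)\cdot c$ of Theorem~\ref{t0}. That is the form that survives for a larger group $G\supseteq H$, where only the bound $c\le\prod_\wp(1+c_\wp)$ remains.

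\textbf{The loose ends you flag are one-liners.}
\begin{itemize}
\item $\sigma$-invariance: if $J^n=x\mathcal{O}_L$ with $x\in K^*$, then $\sigma(J)^n=\sigma(J^n)=J^n$. Since $\mathcal{I}_L$ is free abelian, $n$-th roots are unique, so $\sigma(J)=J$. Hence $\mathcal{Q}_{L/K}\subseteq\mathcal{I}_L^{\mathrm{Gal}(L/K)}$, exactly as the paper notes.
\item Freeness of $\mathcal{O}_L$ is not an obstacle. You already used $\mathcal{O}_L=\mathbb{Z}[\eta]=\mathcal{O}_K[\eta]$ (equality, not just $\supseteq$). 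So $1,\eta,\dots,\eta^{n-1}$ is an $\mathcal{O}_K$-basis; this is Cor.~\ref{c511}.
\item To pass from $N_{L/K}(J)\in\mathcal{P}_K$ to $J^n\in\mathcal{P}_K$ and back, use that $N_{L/K}(J)\mathcal{O}_L=J^n$ and that $\mathcal{I}_K\to\mathcal{I}_L$ is injective.
\item To make $\phi(b)$ act as $\sigma$ itself, rather than as $m_\lambda\sigma$ with $N_{L/K}(\lambda)=1$, apply Hilbert 90: write $\lambda=\mu/\sigma(\mu)$ and conjugate by $m_\mu$. Alternatively, argue as the paper does, from equality of characters.
\end{itemize}
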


The second fact relates to the structure of the Bruhat-Tits tree and its
interpretation in terms of maximal ideals. Loosely speaking, when
considering an integral representation as a map from $G$ to the group
of units of a maximal order in a matrix algebra, neighboring orders
containing also the image of $G$ can be ``seen'' as non-trivial
invariant subspaces. This implies that, if all representations correspond
to maximal order that are neighbors of each other, then all integral forms
can be seen from a single representation. In this context, our main result has the following form:

\begin{theo}\label{t1}
    Let $H=H(N,M,k)$ be a group presented as in Eq. (\ref{fmg}), 
    with $r=N$ as above. Let 
    $\phi:H\rightarrow\mathrm{GL}_n(K)$ be a
    faithful, absolutely irreducible representation of $H$,
    whose field of definition is $K$. Then, at every place $\wp$ of $K$, 
    the vertices of the local Bruhat Tits building corresponding to maximal 
    orders containing $\phi(H)$ lie in a simplex.
    This simplex is non-trivial precisely at those places $\wp$
    that are ramified for the extension $L/K$, where $L=K(\eta)$ for
    a primitive $M$-th root of unity $\eta$. In this case,
    the number of vertices of the 
    corresponding simplex is the ramification index
    $e_\wp(L/K)$.
\end{theo}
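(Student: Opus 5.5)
The plan is to make the structure of $\phi$ explicit and then read off the maximal orders locally. Since $r=N$, the element $b$ acts on $\langle a\rangle$ through a cyclic group of automorphisms of order $N$. A faithful absolutely irreducible representation is therefore induced from a faithful character $\chi$ of $\langle a\rangle$, with $\chi(a)=\eta$ a primitive $M$-th root of unity, and $n=N$. Set $F=K(\eta)=L$. The field of definition $K$ is the fixed field of the automorphisms $\eta\mapsto\eta^{k^i}$, so $L/K$ is cyclic of degree $N=n$, with Galois group generated by $\sigma:\eta\mapsto\eta^k$.

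Next I identify the algebra spanned by the image. The $K$-span of $\phi(H)$ is the cyclic algebra $(L/K,\sigma,\phi(b)^N)=(L/K,\sigma,1)\cong\mathrm{M}_n(K)$. In this algebra $\phi(a)$ generates a copy of $L$, acting on $V=K^n\cong L$, and $\phi(b)$ acts as $\sigma$. The image $\phi(H)$ lies in a maximal $\mathcal{O}_\wp$-order $\mathrm{End}(\Lambda)$ exactly when the lattice $\Lambda\subseteq V_\wp=L\otimes K_\wp$ is stable under multiplication by $\eta$ and under $\sigma$. Equivalently, $\Lambda$ is a $\sigma$-stable $\mathcal{O}_{L,\wp}$-module, with $\mathcal{O}_K[\eta]=\mathcal{O}_L$ (in fact $\mathbb{Z}[\eta]$ already generates).

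The local analysis then splits $L\otimes K_\wp=\prod_{\mathfrak P|\wp}L_{\mathfrak P}$, and these factors are permuted transitively by $\sigma$. An $\mathcal{O}_L$-lattice has the form $\prod\mathfrak P^{m_\mathfrak P}$, and $\sigma$-stability forces all $m_\mathfrak P$ to be equal. Hence the stable lattices are the powers $\mathfrak P_0^{m}$ with $\mathfrak P_0=\prod_{\mathfrak P|\wp}\mathfrak P$, and we have $\mathfrak P_0^{e}=\wp\mathcal{O}_L$ with $e=e_\wp(L/K)$. Up to homothety by $K_\wp^*$, this gives exactly $e$ classes $\Lambda_m=\mathfrak P_0^m$ for $0\le m<e$. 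They form a chain
\[
\wp\Lambda_0\subsetneq\Lambda_{e-1}\subsetneq\cdots\subsetneq\Lambda_1\subsetneq\Lambda_0 .
\]
In the Bruhat-Tits building such a chain spans a simplex with $e$ vertices. That simplex is trivial, a single vertex, precisely when $e=1$, that is, when $\wp$ is unramified in $L/K$.

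The main obstacle is the first step. I must justify that $\phi$ really is the induced representation with $n=N$, that $K$ is exactly the fixed field of $\langle k\rangle$ (no smaller field of definition), and that every invariant lattice is an $\mathcal{O}_L$-module. The last point needs the $\mathcal{O}_{K,\wp}$-algebra generated by $\phi(a)$ to be the full $\mathcal{O}_{L,\wp}$. I would handle this by noting that this algebra contains $\mathcal{O}_{K,\wp}[\eta]$, which equals $\mathcal{O}_{L}\otimes\mathcal{O}_{K,\wp}$ because $\mathbb{Z}[\eta]$ is the ring of integers of $\mathbb{Q}(\eta)$ and $L$ is generated over $K$ by $\eta$. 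I would still check the compositum issue: $\mathcal{O}_K[\eta]=\mathcal{O}_L$ may fail in general. If it fails, I would instead argue with the idempotent decomposition and Galois descent, using that $\sigma$-stable $\mathcal{O}_K[\eta]$-lattices in $L_\wp$ are still totally ordered.
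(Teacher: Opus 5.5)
Your proposal is correct and follows essentially the paper's route. The paper reduces $\phi$ to $a\mapsto m_\eta$, $b\mapsto\sigma$ on $L$: it uses Lemma \ref{l23}, together with the observation that $r=N$ forces $\nu=1$ and allows $\lambda=1$, and then Lemma \ref{l24}. It identifies the invariant lattices with Galois-stable fractional $\mathcal{O}_L$-ideals (Lemma \ref{l22}) and reads off the chain $\wp\mathcal{O}_L=\tilde\wp^{e}\subseteq\cdots\subseteq\tilde\wp\subseteq\mathcal{O}_L$ (Prop. \ref{p34}); you differ only in using Clifford theory instead of the dimension count and in working directly in $L\otimes K_\wp$. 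Your worry about $\mathcal{O}_K[\eta]\neq\mathcal{O}_L$ does not arise here. Since $K$ is generated by character values, $K\subseteq\mathbb{Q}(\eta)$, so $L=\mathbb{Q}(\eta)$ and $\mathcal{O}_L=\mathbb{Z}[\eta]=\mathcal{O}_K[\eta]$, which is exactly what the paper uses.
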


\section{Characterizing the complex representations}

The purpose of this section is to give an explicit list
of characters for all representations of the group
$H(N,M,k)$, presented as in Eq. (\ref{fmg}). 

Let $\nu$ be a root of unity
of order $N/r$. Let $\Xi$ be the set of primitive 
$M$-th roots of unity, and let $\sigma$ be the Galois
automorphism taking each element of $\Xi$ to its $k$-th
power. Finally, let $\mathbf{O}$ be an orbit in $\Xi$
under the action of the group $\langle\sigma\rangle$.
Then we define the representation
$\phi=\phi_{\mathbf{O},\nu}$ as follows:
\begin{equation}\label{rgen}
\phi(a)=\left(\begin{array}{ccccc}
 \xi & 0 & 0 & \cdots & 0 \\
 0 & \xi^k & 0 & \cdots & 0 \\
 0 & 0 & \xi^{k^2} & \cdots & 0 \\
  \vdots & \vdots & \vdots & \ddots & \vdots \\
  0 & 0 & 0 &\cdots &\xi^{k^{r-1}}
\end{array}\right),
\qquad
\phi(b)=\left(\begin{array}{ccccc}
 0 & 1 & 0 & \cdots & 0 \\
 0 & 0 & 1 & \cdots & 0 \\
 0 & 0 & 0 & \cdots & 0 \\
  \vdots & \vdots & \vdots & \ddots & \vdots \\
  0 & 0 & 0 &\cdots & 1 \\
\nu & 0 & 0 &\cdots & 0
\end{array}\right),
\end{equation}
where $\xi\in\mathbf{O}$. Note that $\phi(b^r)=\nu\mathtt{1}$, 
where $\mathtt{1}$ is the identity matrix.

Let $K$ be a field, and let $L$ be an $r$-dimensional $K$-algebra. 
Since $L$ is a $K$-vector space of dimension $r$, we can identify 
the endomorphism ring $\mathbb{M}:=\mathrm{End}_K(L)$ with the matrix 
algebra $\mathbb{M}_r(K)$. We do this in all that follows.

\begin{lem}\label{l38}
    Let $L$ be an $r$-dimensional \'etale commutative $K$-algebra
    over a field $K$, and identify any element $\lambda\in L$ with 
    the $K$-linear map $m_\lambda\in \mathbb{M}$
    defined by $m_\lambda(\mu)=\lambda\mu$. Let $\sigma:L\rightarrow L$
    be a $K$-algebra automorphism of order $r$, whose invariant subalgebra 
    is the $K$ span $K1_L$ of the identity element $1_L\in L$. 
    Regard $\sigma$ as an element in the corresponding endomorphism 
    ring $\mathbb{M}$, as defined above. Then 
    $\mathbb{M}=K[L,\sigma]=\bigoplus_{i=0}^{r-1}L\sigma^i$.
\end{lem}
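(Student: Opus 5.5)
Both sides are $K$-vector spaces, and $\mathbb{M}=\mathrm{End}_K(L)$ has dimension $r^2$. Each $L\sigma^i$ has dimension $r$, because $\lambda\mapsto m_\lambda\sigma^i$ is injective ($\sigma^i$ is invertible and $m_\lambda(1_L)=\lambda$). So $\sum_i L\sigma^i$ has dimension at most $r^2$, and it suffices to show that the sum is direct: the $K$-linear map
\[
\Theta:\ L^r\to\mathbb{M},\qquad (\lambda_0,\dots,\lambda_{r-1})\mapsto\sum_{i=0}^{r-1}m_{\lambda_i}\sigma^i
\]
is injective, hence surjective by dimension count. This is Dedekind/Artin independence of characters, adapted to an étale algebra rather than a field. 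The containment $K[L,\sigma]\subseteq\mathbb{M}$ is clear, and the relation $\sigma m_\lambda=m_{\sigma(\lambda)}\sigma$ shows that $\bigoplus L\sigma^i$ is closed under products, so it equals $K[L,\sigma]$.

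\emph{Reduction to the separable closure.} Extend scalars to a separable closure $\bar K$. Then $\bar L=L\otimes_K\bar K\cong\bar K^r$, since $L$ is étale of dimension $r$, and $\mathrm{End}_K(L)\otimes\bar K=\mathrm{End}_{\bar K}(\bar L)$. The automorphism $\sigma\otimes1$ permutes the $r$ primitive idempotents $e_1,\dots,e_r$ of $\bar L$. Its invariant subalgebra is $\bar K\cdot1$, because taking invariants commutes with flat base change. The invariants of a permutation action on $\bar K^r$ have dimension equal to the number of orbits, so $\sigma$ acts transitively on the $e_j$. Since $\sigma$ has order $r$, its action on $r$ points is a single $r$-cycle. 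Injectivity of $\Theta$ may be checked after base change, so it suffices to work over $\bar K$.

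\emph{Injectivity over $\bar K$.} Label the idempotents so that $\sigma(e_j)=e_{j+1}$, indices mod $r$. In the basis $\{e_j\}$, $m_\lambda$ is diagonal and $\sigma$ is the cyclic permutation matrix. Then $m_{e_j}\sigma^i$ is the elementary matrix sending $e_{j-i}$ to $e_j$ (up to index convention). As $i,j$ range over $\mathbb{Z}/r$, these run over all $r^2$ matrix units exactly once. Hence they form a basis of $\mathbb{M}_r(\bar K)$, which gives both directness and spanning at once.

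The only delicate point is the middle step: deducing from ``fixed algebra $=K$'' and ``order $r$'' that $\sigma$ permutes the geometric idempotents as a single $r$-cycle. The justification is the orbit count above. Alternatively, one can argue directly with the Artin independence argument, using that $\sigma^i-\mathrm{id}$ is nonzero on $L$ for $0<i<r$. However, that argument needs more care when $L$ is not a field, so I would use the base-change argument.
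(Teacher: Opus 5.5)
Your proof is correct and takes essentially the same route as the paper. A dimension count reduces the claim to independence of the $L\sigma^i$; you extend scalars so that $L$ splits as $\bar K^r$; the fixed-algebra condition forces $\sigma$ to permute the primitive idempotents transitively; and a coordinate computation finishes. The paper ends by multiplying a relation $\sum_i\mu_i\sigma^i(\rho_t)=0$ by idempotents, where you exhibit the $m_{e_j}\sigma^i$ as matrix units, and it only asserts the transitivity that you justify by the orbit count and flat base change.
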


\begin{proof}
    Write $L=K[\lambda]$. Since $\mathbb{M}$ has dimension $r^2$ as 
    a $K$-vector space, it suffices to prove that 
    $\sum_{i=0}^{r-1}\mu_i\sigma^i=0$, with $\mu_i\in L$, implies
    $\mu_0=\cdots=\mu_{r-1}=0$. Assume 
    $\sum_{i=0}^{r-1}\mu_i\sigma^i=0$. This is equivalent to
    $\sum_{i=0}^{r-1}\mu_i\sigma^i(\mu)=0$ for every $\mu\in L$.
    Extending scalars if needed, we can assume that $K$ is 
    algebraically closed, and therefore $L$ is isomorphic to a cartesian
    product $K^n$ with coordinate-wise operations. Note that the 
    hypothesis that $L$ is \'etale is used here, since it is preserved
    under field extensions. Let $\rho_0,\dots,\rho_{r-1}$ be the minimal 
    idempotents in $L$. Then $\sigma$ acts on the set 
    $\{\rho_0,\dots,\rho_{r-1}\}$. The condition on the invariant subalgebra of 
    $\sigma$ implies that this action is transitive. Renaming, we can assume that
    $\sigma^i(\rho_t)=\rho_{t+i}$, where the index is considered as an integer 
    modulo $r$. It follows that $\sum_{i=0}^{r-1}\mu_i\rho_{i+t}=
    \sum_{i=0}^{r-1}\mu_i\sigma^i(\rho_t)=0$. Postmultiplying by $\rho_{j+t}$ we 
    get $\mu_j\rho_{j+t}=0$. This means that the $(j+t)$-th coordinate of $\mu_j$
    is trivial. Since $t$ was arbitrarily chosen, we conclude that  $\mu_j=0$ as
    claimed.  
\end{proof}

\begin{cor}\label{c211}
    Let $L/K$ be a cyclic extension of degree $r$, 
    and identify any $\lambda\in L$
    with the $K$-linear map $m_\lambda\in \mathbb{M}$
    defined by $m_\lambda(\mu)=\lambda\mu$. Let $\sigma$
    be a generator of the Galois group 
    $\mathrm{Gal}(L/K)\subseteq \mathbb{M}^*$. Then
    $\mathbb{M}=K[L,\sigma]=\bigoplus_{i=0}^{n-1}L\sigma^i$.\qed
\end{cor}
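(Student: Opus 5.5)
The plan is to obtain Corollary \ref{c211} as a direct specialization of Lemma \ref{l38}, checking one by one that a cyclic extension with a chosen generator of its Galois group satisfies the lemma's hypotheses. The conclusion should read $\bigoplus_{i=0}^{r-1}L\sigma^i$. The upper index $n-1$ in the statement appears to be a typo for $r-1$, since here $[L:K]=r$. Throughout, $\mathbb{M}=\mathrm{End}_K(L)\cong\mathbb{M}_r(K)$ is identified with matrices as in the paragraph preceding the lemma.

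First, $L$ is an $r$-dimensional commutative $K$-algebra, and it is \'etale. The reason is that a cyclic (in particular Galois) extension is separable, so $L\cong K[x]/(f)$ with $f$ separable. After any base change $F/K$, $L\otimes_K F\cong F[x]/(f)$ is therefore a product of fields, i.e. reduced. This is exactly the property invoked in the proof of Lemma \ref{l38} when it passes to an algebraic closure.

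Second, the generator $\sigma$ of $\mathrm{Gal}(L/K)$ is a $K$-algebra automorphism of $L$. Its order is $|\mathrm{Gal}(L/K)|=[L:K]=r$. By the fundamental theorem of Galois theory, its fixed subalgebra is $L^{\langle\sigma\rangle}=L^{\mathrm{Gal}(L/K)}=K=K1_L$. Moreover, regarding $\sigma$ as an element of $\mathbb{M}^*$ is the same identification used in Lemma \ref{l38}, as is identifying $\lambda$ with $m_\lambda$.

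With these hypotheses verified, Lemma \ref{l38} applies verbatim and yields $\mathbb{M}=K[L,\sigma]=\bigoplus_{i=0}^{r-1}L\sigma^i$. There is no real obstacle. The only point needing care is the \'etale condition, which is settled by separability of Galois extensions. Alternatively, the corollary follows from Dedekind's independence of characters: the maps $\sigma^0,\dots,\sigma^{r-1}$ are distinct characters $L^*\to L^*$, hence $L$-linearly independent, and counting dimensions ($r\cdot r=r^2$) finishes the argument.
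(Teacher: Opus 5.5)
Your proposal is correct and matches the paper, which gives the corollary with an immediate \qed as a direct specialization of Lemma~\ref{l38}; your check of the hypotheses (\'etaleness from separability, and $\sigma$ having order $r$ with fixed field $K$) is exactly the verification the paper leaves implicit. You are also right that the upper index $n-1$ should read $r-1$.
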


\begin{lem}
    The representations $\phi_{\mathbf{O},\nu}$ defined above
    are absolutely irreducible. Furthermore, if $\phi_{\mathbf{O},\nu}$ 
    and $\phi_{\mathbf{O}',\nu'}$ are conjugates, then
    $\mathbf{O}=\mathbf{O}'$ and $\nu=\nu'$.
\end{lem}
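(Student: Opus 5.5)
Two things need proving: absolute irreducibility, and that the pair $(\mathbf{O},\nu)$ is recovered from the conjugacy class.

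For absolute irreducibility we work over $\mathbb{C}$, or over any field containing $\xi$ and $\nu$. The diagonal entries of $\phi(a)$ are $\xi,\xi^k,\dots,\xi^{k^{r-1}}$. Since $\xi$ is a primitive $M$-th root of unity and $r$ is the order of $k$ in $(\mathbb{Z}/M\mathbb{Z})^*$, the exponents $k^i$ for $0\le i<r$ are distinct modulo $M$, so these entries are pairwise distinct. Hence the eigenspaces of $\phi(a)$ are the lines $K\mathbf{e}_i$. Any $\phi(H)$-invariant subspace $W$ is $\phi(a)$-invariant, so it is a sum of some of these coordinate lines. Now $\phi(b)$ sends $\mathbf{e}_{i+1}$ to $\mathbf{e}_i$ and $\mathbf{e}_1$ to $\nu\mathbf{e}_r$, with $\nu\neq0$, so it permutes the coordinate lines in a single $r$-cycle. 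Therefore a nonzero invariant $W$ contains all of them and $W=K^r$. Alternatively, we can obtain the same conclusion in the spirit of Lemma \ref{l38}. Let $L'=K^r$ be the diagonal algebra, which is spanned by the powers of $\phi(a)$ because the eigenvalues are distinct. Conjugation by $\phi(b)$ induces on $L'$ a cyclic shift whose fixed algebra is $K1$. By Lemma \ref{l38}, the algebra spanned by $\phi(H)$ is all of $\mathbb{M}_r(K)$, and absolute irreducibility follows.

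For the uniqueness we compare characters, since conjugate representations have equal characters. First, $\phi(b^r)=\nu\mathtt{1}$, so $\chi(b^r)=r\nu$. Hence $\nu=\nu'$, and in particular the dimensions $r$ agree. Second, $\phi(a)$ has eigenvalue multiset $\mathbf{O}=\{\xi^{k^i}\}$, which is the orbit of $\xi$ under $\sigma$. Conjugate matrices have the same eigenvalues, so $\mathbf{O}=\mathbf{O}'$ as sets, and two orbits that share an element coincide. This uses only that $a$ is sent to $a$, i.e.\ that the conjugation is an equality of representations of the presented group, not a twist by an automorphism.

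The only point needing care is checking that the entries $\xi^{k^i}$ are distinct. This holds exactly because $r$ is the order of $k$ modulo $M$ and $\xi$ has order $M$, so no real obstacle arises. It is also worth checking that $\phi$ is well defined, i.e.\ that $\phi(b)\phi(a)\phi(b)^{-1}=\phi(a)^k$ and $\phi(b)^N=\mathtt{1}$, the latter because $\nu^{N/r}=1$. These are routine verifications.
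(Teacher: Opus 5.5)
Your proof is correct. The uniqueness half is exactly the paper's argument: the eigenvalues of $\phi(a)$ recover the orbit $\mathbf{O}$, and the central element $\phi(b^r)=\nu\mathtt{1}$ recovers $\nu$. Your remark that this also forces the dimensions to agree is unnecessary, since $r$ depends only on $k$ and $M$.

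For absolute irreducibility, the paper uses only what you call the alternative. It identifies $K^r$ with the diagonal algebra $L$ and takes $\sigma=\phi_{\mathbf{O},1}(b)$. It then observes that $\phi_{\mathbf{O},\nu}(b)$ is $\sigma$ times the unit $\mathrm{diag}(1,\dots,1,\nu)$ of $L$, and concludes from Lemma \ref{l38} that $\phi(H)$ spans $\mathbb{M}_r(K)$. If you keep that version, state this factorization explicitly. Lemma \ref{l38} is about $\sigma$ itself as an element of $\mathrm{End}_K(L)$, not about an arbitrary matrix that induces the shift by conjugation. Such a matrix differs from $\sigma$ by a unit of $L$, because $L$ is its own centralizer, and that step is missing from your sketch.

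Your primary argument is complete on its own. Distinct eigenvalues force every invariant subspace to be a sum of coordinate lines, and $\phi(b)$ permutes those lines in a single $r$-cycle. This is a more hands-on version of the same idea and works verbatim over any extension field. The paper's formulation instead gives the algebra-level statement that $\phi(H)$ spans $\mathbb{M}_r(K)$. That matches the crossed-product description $L[\sigma]$ used in Lemma \ref{l24} and later.
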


\begin{proof}
  Set $L=K^n$, as before.
  Identify $L$ with the ring of diagonal matrices and set
  $\sigma=\phi_{\mathbf{O},1}(b)$, as in Eq. (\ref{rgen}), i.e., $\sigma$
  is the matrix obtained by replacing $\nu$ by $1$ in $\phi_{\mathbf{O},\nu}(b)$.
  It is easy to see that $\sigma$ thus defined satisfies the hypotheses of Lemma
  \ref{l38}. Since $\phi_{\mathbf{O},\nu}(b)$ is the product of $\sigma$
  and an invertible element of $L$, the first statement is immediate. Now 
  assume $\phi_{\mathbf{O},\nu}$ and $\phi_{\mathbf{O}',\nu'}$ are conjugates. 
  Then the matrices $\phi_{\mathbf{O},\nu}(a)$ and $\phi_{\mathbf{O}',\nu'}(a)$
  have the same eigenvalues. It follows that $\mathbf{O}=\mathbf{O}'$.
  Since the central elements $\phi_{\mathbf{O},\nu}(b^r)=\nu\mathtt{1}$ and
  $\phi_{\mathbf{O}',\nu'}(b^r)=\nu'\mathtt{1}$ are conjugates, then $\nu=\nu'$.
\end{proof}

\begin{lem}\label{l23}
    Every faithful complex representation of $H(N,M,k)$ is a conjugate
    of some representation of the form $\phi_{\mathbf{O},\nu}$.
\end{lem}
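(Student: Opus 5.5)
The plan is to diagonalize $\psi(a)$ for an irreducible faithful representation $\psi:H\to\mathrm{GL}(V)$, and then use $\psi(b)$ to permute the eigenspaces cyclically. We read the statement as referring to irreducible faithful representations, since the $\phi_{\mathbf{O},\nu}$ are irreducible.

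\textbf{Eigenspaces of $a$ and the action of $b$.} Since $a^M=e$, $\psi(a)$ is diagonalizable, so $V=\bigoplus_\zeta V_\zeta$ with $\zeta$ running over its eigenvalues. The relation $bab^{-1}=a^k$ gives $ba=a^kb$. Hence if $v\in V_\zeta$ then $\psi(a)^k\psi(b)v=\zeta\,\psi(b)v$, which places $\psi(b)v$ in the eigenspace of eigenvalue $\zeta^{k'}$, where $k'$ is an inverse of $k$ modulo $M$. Therefore $\psi(b)V_\zeta=V_{\zeta^{k'}}$, so $\psi(b)$ permutes the eigenspaces along the orbits of $\zeta\mapsto\zeta^k$.

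\textbf{The eigenvalues form one orbit of primitive roots.} For any orbit, the sum of the corresponding eigenspaces is $H$-invariant. By irreducibility, the eigenvalues of $\psi(a)$ therefore form a single orbit. All elements of an orbit have the same multiplicative order $d$, so $\psi(a)^d=1$. Faithfulness then forces $d=M$, and the eigenvalues form an orbit $\mathbf{O}\subseteq\Xi$. Fix $\xi\in\mathbf{O}$. Since $\xi$ is primitive, $\xi^{k^i}=\xi$ iff $k^i\equiv1 \pmod M$, so $|\mathbf{O}|=r$.

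\textbf{The scalar $\nu$.} The element $b^r$ commutes with $a$, because $a^{k^r}=a$, and obviously with $b$. Hence $b^r$ is central, and by Schur's lemma $\psi(b^r)=\nu\mathtt{1}$ for some $\nu$. The order of $b^r$ in $H$ is $N/r$, so faithfulness makes $\nu$ a root of unity of exact order $N/r$.

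\textbf{Building the basis.} Take $0\neq v\in V_\xi$ and define $e_1=v$ and $e_j=\nu^{-1}\psi(b)^{r-j+1}v$ for $2\le j\le r$. Then $e_j$ lies in the eigenspace of eigenvalue $\xi^{k^{j-1}}$; for example, $e_r=\nu^{-1}\psi(b)v$ has eigenvalue $\xi^{k'}=\xi^{k^{r-1}}$. These $r$ eigenvalues are distinct, so the $e_j$ are linearly independent. By construction $\psi(b)e_j=e_{j-1}$ for $j\ge2$. Using $\psi(b)^r=\nu\mathtt{1}$, we get $\psi(b)e_1=\psi(b)v=\nu e_r$.

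\textbf{Conclusion.} The span of $e_1,\dots,e_r$ is therefore invariant under $\psi(a)$ and $\psi(b)$, so by irreducibility it is all of $V$. In this basis $\psi(a)$ and $\psi(b)$ are exactly the matrices in Eq. (\ref{rgen}), so $\psi$ is conjugate to $\phi_{\mathbf{O},\nu}$.

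The only delicate points are the bookkeeping of the direction in which $b$ moves eigenspaces, which is why the basis is indexed by decreasing powers of $\psi(b)$, and the use of faithfulness. Faithfulness is needed twice: once to force the eigenvalue orbit to consist of primitive $M$-th roots, and once to give $\nu$ exact order $N/r$.
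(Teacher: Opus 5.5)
Your proof is correct, and it takes a different route from the paper.

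The paper argues by counting. It writes down $\phi_{\mathbf{O}',\nu'}$ for every orbit $\mathbf{O}'$ of $M$-th roots of unity (not necessarily primitive) under $\zeta\mapsto\zeta^k$, and for every $(N/r')$-th root of unity $\nu'$. It uses the preceding lemma to know these are absolutely irreducible and pairwise inequivalent. It then checks that $\sum_{M'\mid M}(r')^2\cdot\frac{\varphi(M')}{r'}\cdot\frac{N}{r'}=MN=|H|$, so they exhaust the irreducible representations, and finally singles out the faithful ones.

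You instead give a direct, constructive argument. $\psi(b)$ permutes the eigenspaces of $\psi(a)$ along orbits, and irreducibility forces a single orbit. Faithfulness then forces that orbit to consist of primitive roots and forces $\nu$ to have exact order $N/r$. Your basis $e_j=\nu^{-1}\psi(b)^{r-j+1}v$ reproduces Eq.~(\ref{rgen}) exactly; the indexing checks out, including $\psi(b)e_2=\nu^{-1}\psi(b)^r v=e_1$.

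What each route buys:
\begin{itemize}
\item The paper's approach gives the complete list of irreducible representations of $H$ in one stroke. It pays for this by relying on the preceding lemma and on the degree formula $\sum(\dim)^2=|H|$.
\item Your approach is self-contained and exhibits the conjugating basis explicitly. It also uses only the implication ``faithful $\Rightarrow$ orbit of primitive roots and $\nu$ of exact order $N/r$'', which is all the lemma needs.
\end{itemize}

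That implication is the safe direction. The converse asserted in the last sentence of the paper's proof can fail. For example, in $H(4,4,3)$ with $\mathbf{O}=\{i,-i\}$ and $\nu=-1$, one has $\phi(a^2)=\phi(b^2)=-\mathtt{1}$, so the nontrivial element $a^2b^{-2}$ lies in the kernel.
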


\begin{proof}
    Let $\Xi'$ be the set of all $M$-th roots of unity, primitive or otherwise.
    Consider an orbit $\mathbf{O}'\in\Xi'$ under the
    action $\xi\mapsto\xi^k$. All roots in such an orbit have the
    same multiplicative order $M'$, which is a divisor of $M$.
    Let $r'$ be the multiplicative order of $k$ modulo $M'$,
    which is the cardinality of $\mathbf{O}'$,
    and let $\nu$ be an arbitrary $(N/r')$-root of unity. 
    Then $H(N,M,k)$ has a representation $\phi_{\mathbf{O}',\nu'}$
    that factors through $H(N,M',k)$. Then
    the representation $\phi_{\mathbf{O}',\nu'}$  has dimension $r'$, 
    it is absolutely irreducible, and completely determines the pair
    $(\mathbf{O}',\nu')$ by the preceding lemma.
    Since $MN$ is the order of the group $H(N,M,k)$,
    the computation
    $$\sum_{M'|M}(r')^2\cdot\frac{\varphi(M')}{r'}\cdot\frac N{r'}=MN,$$
    where $\varphi$ denotes Euler's function on positive integers,
    proves that every representation of $H(N,M,k)$ has the form 
    $\phi_{\mathbf{O}',\nu'}$. Now the result follows if we observe that
    $\phi_{\mathbf{O}',\nu'}$ is faithful precisely when $M'=M$ and
    $\nu'$ is primitive.
\end{proof}

\begin{lem}\label{l24}
    Let $\xi\in\mathbf{O}$ be an $M$-th root of unity, and
    let $K_0\subseteq L=\mathbb{Q}[\xi]$ be
    the invariant field of the automorphism 
    $\sigma$ given by $\sigma(\xi)=\xi^k$. Then $\phi_{\mathbf{O},\nu}$ 
    is defined over $K_0$ if and only if $\nu\in K_0$ and there
    is an element $\lambda\in L$ satisfying $N_{L/K_0}(\lambda)=\nu$.
    If this condition is satisfied, then $K_0$ is the unique minimal field of
    definition of $\phi_{\mathbf{O},\nu}$. In other words, the values of the
    character generate $K_0$, and the Schur index is $1$.
\end{lem}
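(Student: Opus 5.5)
We first show that the character field of $\phi=\phi_{\mathbf{O},\nu}$ is $K_0(\nu)$, and then use the cyclic-algebra description of Corollary \ref{c211} to reduce the question of a field of definition to a norm condition.

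\textbf{Step 1: the character field.} Here $L=\mathbb{Q}[\xi]$ and $\mathrm{Gal}(L/K_0)=\langle\sigma\rangle$ is cyclic of order $r$. The trace of $\phi(a^j)$ is $\sum_{i}\xi^{jk^i}=\mathrm{Tr}_{L/K_0}(\xi^j)$, so it lies in $K_0$. The trace of $\phi(a^jb^m)$ vanishes unless $r\mid m$, since $\phi(b)^m$ is then a monomial matrix without fixed coordinates. When $r\mid m$, the value is $\nu^{m/r}\mathrm{Tr}_{L/K_0}(\xi^j)$. As $\xi$ is a primitive $M$-th root of unity, the traces $\mathrm{Tr}_{L/K_0}(\xi^j)$ span $K_0$: apply the normal basis theorem to the basis $\{\xi^j\}$ of $L$. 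Hence $\mathbb{Q}(\chi_\phi(H))=K_0(\nu)$. Any field of definition contains the character field, so if $\phi$ is defined over $K_0$ we get $\nu\in K_0$.

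\textbf{Step 2: the $K_0$-span of the image.} Assume $\nu\in K_0$ and consider the $K_0$-algebra $A$ spanned by $\phi(H)$. It contains $K_0[\phi(a)]\cong L$, embedded diagonally through its $r$ embeddings. It also contains $u=\phi(b)$, which satisfies $u\lambda u^{-1}=\sigma(\lambda)$ (or $\sigma^{-1}$, depending on conventions) and $u^r=\nu$. So $A$ is a quotient of the cyclic algebra $(L/K_0,\sigma,\nu)$. That cyclic algebra is central simple of dimension $r^2$, so $A\cong(L/K_0,\sigma,\nu)$. The representation $\phi$ is then the extension of scalars to $\mathbb{C}$ of the natural map $H\to A^*$.

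Since $\phi$ is absolutely irreducible, $\phi$ is realizable over $K_0$ exactly when $A\cong\mathbb{M}_r(K_0)$, that is, when $A$ splits. In one direction, a splitting gives a $K_0$-representation whose extension to $\mathbb{C}$ has the same character, hence is conjugate to $\phi$. In the other direction, if $\phi'(H)\subseteq\mathrm{GL}_r(K_0)$, then the $K_0$-span of $\phi'(H)$ is $r^2$-dimensional and isomorphic to $A$, so it equals $\mathbb{M}_r(K_0)$.

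By the classical theory of cyclic algebras, $(L/K_0,\sigma,\nu)$ splits if and only if $\nu\in N_{L/K_0}(L^*)$. For a self-contained argument, use Corollary \ref{c211} directly. If $\nu=N(\lambda)$, define $u'=\lambda\sigma\in\mathrm{End}_{K_0}(L)$. Then $u'm_\mu u'^{-1}=m_{\sigma(\mu)}$ and $(\lambda\sigma)^r=N(\lambda)=\nu$. The assignment $a\mapsto m_\xi$, $b\mapsto u'$ therefore gives a representation into $\mathbb{M}_r(K_0)$ with the same character, and this proves the "if" direction. Conversely, a splitting gives an $A$-module of $K_0$-dimension $r$. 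It is free of rank one over $L$, and choosing a generator $v$ gives $uv=\lambda' v$ with $u^r=N(\lambda')$ or $N(\lambda')^{-1}$, adjusting for conventions. Hence $\nu$ is a norm.

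\textbf{Step 3: minimality and the main obstacle.} When the condition holds, $\nu\in K_0$, so the character field is exactly $K_0$. Every field of definition contains the character field, so $K_0$ is the unique minimal field of definition and the Schur index is $1$.

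The main obstacle is the converse of Step 2, namely extracting a norm from an arbitrary $K_0$-realization. The key point is that the module is a rank-one $L$-vector space. This holds because $A\otimes_{K_0}L$-structure considerations force $L$ to act with all $r$ eigencharacters, so the module is free of rank one over $L$. Some care is also needed with the direction of $\sigma$ versus $\sigma^{-1}$ and with $\nu$ versus $\nu^{-1}$. These are harmless, because $\nu^{-1}$ is a norm if and only if $\nu$ is.
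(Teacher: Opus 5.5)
Your proof is correct, and its core is the same as the paper's. The paper takes a realization $\phi:H\rightarrow\mathrm{GL}_r(K_0)$ and identifies $K_0^r$ with $L$ so that $\phi(a)=m_\xi$. Since $\phi(b)\sigma^{-1}$ centralizes the maximal commutative subalgebra $L$, it gets $\phi(b)=m_\lambda\sigma$, and hence $\nu\mathtt{1}=\phi(b)^r=m_{N_{L/K_0}(\lambda)}$. For the converse it writes down $a\mapsto m_\xi$, $b\mapsto m_\lambda\sigma$, exactly as you do. Your choice of an $L$-generator $v$ with $uv=\lambda'v$ is the same identification in module language.

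The differences are in the packaging and in what gets justified. The paper never forms the cyclic algebra $A\cong(L/K_0,\sigma,\nu)$. It works directly with a given realization, and $\nu\in K_0$ falls out of $\nu=N_{L/K_0}(\lambda)$.

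Your Step 1 supplies something the paper's proof only asserts: the explicit computation $\mathbb{Q}(\chi_\phi(H))=K_0(\nu)$. This is what actually justifies the last sentence of the lemma, namely that $K_0$ is the unique minimal field of definition and the Schur index is $1$.

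The cyclic-algebra framing gives you the clean reformulation ``realizable over $K_0$ iff $A$ splits'' and links the lemma to the classical norm criterion. The cost is extra verification: that $A$ is central simple of dimension $r^2$, and Skolem--Noether (or equality of reduced traces) to get conjugacy. The paper's direct route avoids both.

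Two small corrections. First, the normal basis theorem is not what makes the traces $\mathrm{Tr}_{L/K_0}(\xi^j)$ span $K_0$. What does is that the trace of a separable extension is surjective and $\mathbb{Q}$-linear, and the powers of $\xi$ span $L$ over $\mathbb{Q}$. Second, what you call the main obstacle is immediate. A simple $A$-module $V$ has $\dim_{K_0}V=r=[L:K_0]$ and is a vector space over the field $L$, so it is one-dimensional over $L$. With $u\mu u^{-1}=\sigma(\mu)$ you get exactly $u^rv=N_{L/K_0}(\lambda')v$, so no $\nu$ versus $\nu^{-1}$ ambiguity arises.
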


\begin{proof}
    By Galois theory, $L=\mathbb{Q}[\xi]$ is an extension of degree $r$ of
    the field $K_0$ and $L=K_0[\xi]$. Assume $\phi_{\mathbf{O},\nu}$
    is defined over $K_0$, i.e., there is a representation 
    $\phi:H(N,M,k)\rightarrow \mathrm{GL}_r(K)$ that is conjugate to
    $\phi_{\mathbf{O},\nu}$ over $L$. 
    Since $\xi$ is an eigenvalue of $\phi(a)$,
    the latter matrix has a minimal polynomial
    equal to the irreducible polynomial
    of $\xi$ over $K_0$. It follows that 
    $\phi(a)$ generates a subalgebra 
    $\mathbb{L}\subseteq\mathbb{M}_r(K)$
    that is isomorphic to $L$. Hence,
    we can identify the vector space $K^r$ with $L$, and therefore
    $\mathbb{M}_r(K)$ with $\mathbb{M}$, in
    a way that $\phi(a)=m_\xi$. Furthermore, under these identifications,
    we have $$\phi(b)m_\xi\phi(b)^{-1}=
    \phi(b)\phi(a)\phi(b)^{-1}=\phi(a)^k=m_{\xi^k}=m_{\sigma(\xi)}.$$
    It follows that $\phi(b)\sigma^{-1}$ commutes
    with $m_{\xi}$. Since $m_{\xi}$ generates a maximal 
    commutative subalgebra of 
    $\mathbb{M}$, then $\phi(b)=m_\lambda\sigma$
    for some $\lambda\in\L$. Then a direct computation shows that 
    $$m_\nu=\nu\mathtt{1}=\phi(b)^r=
    (m_\lambda\sigma)^r=
    m_\lambda m_{\sigma(\lambda)}\cdots
    m_{\sigma^{r-1}(\lambda)}\sigma^r=
    m_{N_{L/K_0}(\lambda)}.$$
    The result follows.
    On the other hand, if the conditions are
    satisfied, we can define a representation 
    $\phi_m:H\rightarrow\mathbb{M}^*$ by $\phi_m(a)=m_\xi$ and 
    $\phi_m(b)=m_\lambda\sigma$, where $K^r$
    is identified with $L$ as before.
    Then, the eigenvalues of $\phi_m(a)$
    are the elements in the Galois orbit
    of $\xi$, while $\phi(b)_m^r=\nu\mathtt{1}$
    by the preceding computation.
    This proves that $\phi_m$ is conjugate to
    $\phi_{\mathbf{O},\nu}$ over $L$.
\end{proof}

\section{On Bruhat-Tits buildings and invariant lattices}

Let $K$ be the field of quotients of a Dedekind domain $\mathcal{O}_K$. 
A full lattice in a finitely-dimensional $K$-vector space $V$
is a finitely generated $\mathcal{O}_K$-module 
$\Lambda\subseteq  V$ that spans $V$. 
An order in a central simple $K$-algebra $\mathfrak{A}$, 
e.g., a matrix algebra, is a unitary subring 
$\mathfrak{D}\subseteq\mathfrak{A}$ that
is also a lattice when we regard $\mathfrak{A}$ as
a vector space. Maximal orders, i.e., orders that are not 
properly contained in any larger order, always exist in
a central simple algebra. Furthermore, any order is contained in 
a maximal order. For instance, the endomorphism ring
$\mathrm{End}_{\mathcal{O}_K}(\Lambda)$, of any lattice
$\Lambda\subseteq K^n$, is a maximal order in the algebra
$\mathbb{M}_n(K)$. Every maximal order in $\mathbb{M}_n(K)$
has this form.
Two lattices $\Lambda$ and $\Lambda'$ satisfy
$\mathrm{End}_{\mathcal{O}_K}(\Lambda)=
\mathrm{End}_{\mathcal{O}_K}(\Lambda')$
if and only if $\Lambda'=J\Lambda$ for some non-zero fractional
$\mathcal{O}_K$-ideal $J\subseteq K$. 
A full lattice $\Lambda\subseteq V$ is completely determined
by the family of completions $\{\Lambda_\nu\}_\nu$,
where $\nu$ runs over the set of non-archimedean places of $K$.
For any pair of lattices  $\Lambda$ and $\Lambda'$, the families
$\{\Lambda_\nu\}_\nu$ and $\{\Lambda'_\nu\}_\nu$ are
coherent, in the sense that $\Lambda_\nu=\Lambda'_\nu$
for all but a finite number of places $\nu$. Finally,
every family $\{\Lambda''(\nu)\}_\nu$ of local lattices that
is coherent with $\{\Lambda'_\nu\}_\nu$, in this sense,
does define a lattice $\Lambda''$.

Now, we let $F$ be an arbitrary local field.
The Bruhat-Tits building is the simplicial complex $B_F$ having
one vertex for every homothety class $[\Lambda]$ of lattices 
$\Lambda\subseteq F^n$, and where the classes 
$[\Lambda_1],\dots,[\Lambda_s]$ form a simplex if and only if, 
up to reorder and rescaling, we have inclusions
$$\pi_K\Lambda_s\subseteq\Lambda_1\subseteq\Lambda_2
\subseteq\cdots\subseteq\Lambda_s.$$

For every basis $\beta=\{\mathbf{v}_1,\dots,\mathbf{v}_n\}$,
we define the apartment $\Sigma_\beta$ as the largest subcomplex
whose vertices correspond to lattices with a basis of the form
$\beta'=\{\pi_K^{h_1}\mathbf{v}_1,\dots,\pi_K^{h_n}\mathbf{v}_n\}$.
Note that these are precisely the lattices that satisfy
$\mathtt{P}_j\Lambda\subseteq\Lambda$ for $j\in\{1,\dots,n\}$,
where $\mathtt{P}_j$ is the idempotent satisfying
$\mathtt{P}_j\mathbf{v}_j=\mathbf{v}_j$ and 
$\mathtt{P}_j\mathbf{v}_i=\mathbf{0}$ for $i\neq j$.
For any order $\mathfrak{H}\subseteq\mathbb{M}_n(K)$,
the set of lattices satisfying $\mathfrak{H}\Lambda
\subseteq\Lambda$ can be described as the vertex set of a 
suitable subcomplex denoted $\Sigma_{\mathfrak{H}}$.
In particular, if $\mathfrak{H}$ is the commutative
order generated by the idempotents defined from a basis 
$\beta$ we have $\Sigma_{\mathfrak{H}}=\Sigma_{\beta}$.
If $\mathfrak{H}$ is the order generated by the image of
a representation $\phi$, then $\Sigma_{\mathfrak{H}}$ plays
a significant role in describing the integral 
representations that are conjugate to $\phi$ over the field.

When $L/K$ is a finite extension, we can associate to every
$K$-basis $\beta$ of $K^n$ a corresponding $L$-basis $\beta_L$
of $L^n$. The apartment $\Sigma_{\beta_L}$ in $B_L$
can be consistently associated to the apartment $\Sigma_\beta$
of $B_K$, and in fact the realizations of both apartments
are naturally homeomorphic as topological spaces. However, when $L/K$
is ramified, the simplices $a$ in $\Sigma_\beta$ do not correspond
to simplices in $B_L$, but to a subcomplex that can be obtained
by subdividing each face of $a$ in a consistent way.
Fig.~\ref{nbh1} in \S\ref{secex} illustrates the case
$n=e(L/K)=3$.

\begin{lem}\label{vint}
    Let $\Lambda$ be a lattice defined over a field
    $K$, let $\mathbf{v}$ is a vector in $\Lambda$ that
    is not in $\pi_K\Lambda$, and let $\mathtt{T}$ be a matrix 
    such that $\mathtt{T}(\mathbf{v})\in\pi_K\Lambda$.
    Let $L/K$ be a finite ramified extension.
    Let $\Lambda'$ be an $\mathcal{O}_L$-lattice 
    corresponding to a vertex defined over $L$
    that falls inside a $K$-simplex that has $[\Lambda]$
    as a vertex. Assume $a\in L$ satisfies  
    $a\mathbf{v}\in\Lambda'$.
    Then $\mathtt{T}(a\mathbf{v})\in\pi_L\Lambda'$.
\end{lem}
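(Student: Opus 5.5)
The plan is to compare $\Lambda'$ with the extended lattice $\Lambda_L:=\mathcal{O}_L\Lambda$ and to reduce the statement to two valuation inequalities, one coming from $\mathbf{v}\notin\pi_K\Lambda$ and one from the position of $[\Lambda']$ inside the $K$-simplex. Write $e=e(L/K)$, so that $\pi_K\mathcal{O}_L=\pi_L^e\mathcal{O}_L$. Write $a=\pi_L^tu$ with $u\in\mathcal{O}_L^*$. Throughout I take $\mathtt{T}$ to be $K$-linear, so it commutes with scalars from $L$.

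First I would normalize the chain of the $K$-simplex so that $\Lambda$ is its largest member:
$$\pi_K\Lambda\subseteq\Lambda_1\subseteq\cdots\subseteq\Lambda_s=\Lambda .$$
The vertices of $B_L$ lying in the subdivided simplex are then classes of $\mathcal{O}_L$-lattices built from the $\mathcal{O}_L\Lambda_i$ and powers of $\pi_L$. I would rescale $\Lambda'$ by a power of $\pi_L$ so that $\Lambda'\subseteq\Lambda_L$ but $\Lambda'\not\subseteq\pi_L\Lambda_L$. The key claim is that, with this normalization, the simplex condition forces
$$\pi_L^{e-1}\Lambda_L\subseteq\Lambda'\subseteq\Lambda_L .$$
To prove it, I would use the basis description. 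Choose a $K$-basis adapted to the chain, so that every $\Lambda_i$ is diagonal in it, with exponents in $\{0,1\}$ relative to $\Lambda$. Any $L$-lattice in the subdivided simplex is then diagonal in the same basis, and its exponents (measured in $\pi_L$ after the normalization) lie in $[0,e-1]$. This is exactly the statement that the realizations of $\Sigma_\beta$ and $\Sigma_{\beta_L}$ agree and that $K$-simplices are subdivided. I expect this step to be the main obstacle: it requires making precise which $\mathcal{O}_L$-lattices count as vertices "inside" the $K$-simplex, and checking that the spread of exponents is at most $e-1$ rather than $e$. The spread $e$ would only occur for the far vertex $\pi_K^{-1}$-rescaled, which is the same class as $[\Lambda]$ and so is excluded by the normalization.

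Next, primitivity gives a lower bound on $t$. Since $\mathbf{v}\in\Lambda\setminus\pi_K\Lambda$, the vector $\mathbf{v}$ extends to an $\mathcal{O}_K$-basis of $\Lambda$, hence to an $\mathcal{O}_L$-basis of $\Lambda_L$. Therefore $\pi_L^t\mathbf{v}\in\Lambda_L$ holds only when $t\geq0$. From $a\mathbf{v}\in\Lambda'\subseteq\Lambda_L$ we get $t\ge0$.

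Finally, $\mathtt{T}(\mathbf{v})\in\pi_K\Lambda$ and $K$-linearity give
$$\mathtt{T}(a\mathbf{v})=\pi_L^tu\,\mathtt{T}(\mathbf{v})\in\pi_L^{t}\pi_K\Lambda_L=\pi_L^{t+e}\Lambda_L\subseteq\pi_L^{e}\Lambda_L=\pi_L\bigl(\pi_L^{e-1}\Lambda_L\bigr)\subseteq\pi_L\Lambda',$$
using $t\ge0$ and the key inclusion. Since the conclusion $\mathtt{T}(a\mathbf{v})\in\pi_L\Lambda'$ is invariant under rescaling $\Lambda'$ together with the hypothesis $a\mathbf{v}\in\Lambda'$ (replace $a$ by $\pi_L^ja$), the normalization chosen above loses no generality.
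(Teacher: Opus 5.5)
Your strategy is the paper's argument written without coordinates. Your inclusion $\pi_L^{e-1}\Lambda_L\subseteq\Lambda'\subseteq\Lambda_L$ is its normalization $0=h_1\le h_i\le e-1$ in an adapted basis, and your $t\ge0$ is its $a\in\pi_L^{h_j}\mathcal{O}_L\subseteq\mathcal{O}_L$. However, the step you flagged as the main obstacle is resolved incorrectly.

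Spread $e$ does not occur only for the rescaled class of $[\Lambda]$; the lattice $\pi_K\Lambda_L$ has spread $0$. Take an adapted basis and a vertex of $B_L$ in the closed $K$-simplex whose barycentric coordinate at $[\Lambda]$ is $\theta$. Its normalized exponents lie in $[0,e(1-\theta)]$, with both endpoints attained. So the spread is exactly $e$ on the whole face opposite $[\Lambda]$, including the other $K$-vertices.

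For example, suppose $\Lambda_{s-1}=\langle\mathbf{v}_1,\dots,\mathbf{v}_m,\pi_K\mathbf{v}_{m+1},\dots,\pi_K\mathbf{v}_n\rangle$ with $1\le m<n$. Then $\Lambda'=\mathcal{O}_L\Lambda_{s-1}$ satisfies your normalization, but $\pi_L^{e-1}\mathbf{v}_n\notin\Lambda'$. Worse, the lemma itself fails there. Take $\mathbf{v}=\mathbf{v}_1$, $a=1$, and $\mathtt{T}$ with $\mathtt{T}\mathbf{v}_1=\pi_K\mathbf{v}_n$ and $\mathtt{T}\mathbf{v}_i=\mathbf{0}$ for $i\ge2$. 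Then $\mathtt{T}(a\mathbf{v})=\pi_K\mathbf{v}_n\in\Lambda'\setminus\pi_L\Lambda'$. So no argument that reads ``inside'' as ``in the closed simplex'' can succeed.

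The missing ingredient is to use the hypothesis that $[\Lambda']$ lies inside the $K$-simplex. The paper takes this to mean the interior, via its strict inequalities $0=h_1<\cdots<h_n<e$, and notes that $h_n=e$ would put $\Lambda'$ on the face opposing $\Lambda$. It is enough to require that $[\Lambda']$ lies off the face opposite $[\Lambda]$, i.e.\ $\theta>0$. That is precisely what bounds the top exponent by $e-1$ and gives your key inclusion. With this justification in place, your remaining steps are correct and complete the proof: primitivity of $\mathbf{v}$ giving $t\ge0$, $K$-linearity of $\mathtt{T}$, and invariance under rescaling $\Lambda'$.
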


\begin{proof}
    Let $\Sigma$ be an apartment containing the simplex. 
    Then there is a basis $\{\mathbf{v}_1,\dots,\mathbf{v}_n\}$ 
    with respect to which the vertices of the simplex 
    correspond to the lattices $\Lambda_s$ with basis 
    $$\{\mathbf{v}_1,\dots,\mathbf{v}_s,\pi_K\mathbf{v}_{s+1},
    \dots,\pi_K\mathbf{v}_n\},$$ for some $s\in\{1,\dots,n\}$.
    We can assume $\Lambda=\Lambda_n$. Then we can assume that
     the inner lattice
     $\Lambda'$ has basis $\{\pi_L^{h_1}\mathbf{v}_1,\dots,
     \pi_L^{h_n}\mathbf{v}_n\}$ with $0=h_1<h_2<\cdots<h_n<e$.
     The reason why the inequalities must be strict is that
     $h_s=h_{s+1}$ puts $\Lambda'$ in the face that is opposite
     to $\Lambda_s$, while $h_n=e$ puts $\Lambda'$ in the face 
     opposing $\Lambda$. Write $\mathbf{v}=
     \sum_{i=1}^nb_i\mathbf{v}_i$ with $b_i\in\mathcal{O}_K$. 
     Let $j$ be the largest index
     for which $b_j\notin\pi_K\mathcal{O}_K$. Since 
     $a\mathbf{v}\in\Lambda'$, then $a\in\pi_L^{h_j}\mathcal{O}_L
     \subseteq\mathcal{O}_L$. Furthermore,
     since $\mathtt{T}(\mathbf{v})\in\pi_K\Lambda$, we can
     write $\mathtt{T}(\mathbf{v})=
     \pi_K\sum_{i=1}^nc_i\mathbf{v}_i$, whence we conclude that
     $\mathtt{T}(a\mathbf{v})=
     \pi_Ka\sum_{i=1}^nc_i\mathbf{v}_i$.
     It suffices to prove that $\pi_L^{h_i+1}$ divides $\pi_Ka$
     for every $i=1,\dots,r$, but this follows from
     the inequality $h_i+1\leq e$.
\end{proof}

\begin{rem}
The conclusion of the preceding Lemma still follows when $[\Lambda']$
is in the border of the simplex, unless it is in the face opposed to
$[\Lambda]$, $j=1$ and $a$ is a unit. However, the
current version will suffice for the examples presented here.
\end{rem}

\begin{lem}\label{l22}
    Assume $K=K_0$ is as in Lemma \ref{l24}.
    Consider the ring $R=
    \mathcal{O}_{K_0}[\mathcal{O}_L,\sigma]\subseteq 
    K_0[L,\sigma]=\mathrm{End}_{K_0}L=\mathbb{M}$.
    Then the $R$-invariant $\mathcal{O}_{K_0}$-lattices
    $\Lambda\subseteq L$ are precisely the Galois invariant
    fractional ideals in $L$.
\end{lem}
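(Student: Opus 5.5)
The plan is to unwind what $R$-invariance means and show it is equivalent to being a Galois-stable $\mathcal{O}_L$-submodule of $L$. The ring $R$ is generated over $\mathcal{O}_{K_0}$ by two kinds of elements: the multiplication maps $m_\lambda$ with $\lambda\in\mathcal{O}_L$, and the automorphism $\sigma$, with $\sigma^{-1}=\sigma^{r-1}\in R$. An $\mathcal{O}_{K_0}$-lattice $\Lambda\subseteq L$ is therefore $R$-invariant exactly when two conditions hold:
\begin{enumerate}[label=(\roman*)]
\item $m_\lambda\Lambda\subseteq\Lambda$ for every $\lambda\in\mathcal{O}_L$, that is, $\mathcal{O}_L\Lambda\subseteq\Lambda$;
\item $\sigma(\Lambda)\subseteq\Lambda$.
\end{enumerate}
So the proof amounts to identifying each of these two conditions.

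For condition (i), I would argue as follows. A finitely generated $\mathcal{O}_{K_0}$-module spanning $L$ over $K_0$ that is closed under multiplication by $\mathcal{O}_L$ is a finitely generated $\mathcal{O}_L$-submodule of $L$. It is nonzero and contained in the field $L$, so it is a nonzero fractional $\mathcal{O}_L$-ideal. Conversely, a nonzero fractional ideal $I$ of $L$ is finitely generated over $\mathcal{O}_{K_0}$, because $\mathcal{O}_L$ is a finite $\mathcal{O}_{K_0}$-module and $I\subseteq x^{-1}\mathcal{O}_L$ for some nonzero $x$. It spans $L$ over $K_0$, because it contains $y\mathcal{O}_L$ for some nonzero $y$, and $K_0\mathcal{O}_L=L$. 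Finally, $I$ is stable under the $m_\lambda$ by definition. Hence the lattices satisfying (i) are exactly the nonzero fractional ideals of $L$.

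For condition (ii), observe that $\sigma$ has finite order $r$. For a lattice, $\sigma(\Lambda)\subseteq\Lambda$ then forces $\sigma(\Lambda)=\Lambda$: applying $\sigma$ repeatedly gives the chain $\Lambda\supseteq\sigma\Lambda\supseteq\cdots\supseteq\sigma^r\Lambda=\Lambda$, so every inclusion in it is an equality. Alternatively, $\sigma^{-1}$ is already a power of $\sigma$. Moreover, $\sigma$ generates $\mathrm{Gal}(L/K_0)$, since $K_0$ is by definition the fixed field of $\sigma$. So invariance under $\sigma$ is the same as invariance under the whole Galois group. Combining (i) and (ii), the $R$-invariant lattices are precisely the Galois invariant fractional ideals of $L$. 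Conversely, any such ideal is stable under each generator of $R$, and hence under $R$.

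No step is a real obstacle. The only point needing care is the lattice/fractional-ideal dictionary in (i), in particular finite generation over $\mathcal{O}_{K_0}$. This follows because $\mathcal{O}_L$ is a finitely generated module over the Dedekind domain $\mathcal{O}_{K_0}$, which holds for number fields and for local fields alike, so the argument applies both globally and after completion at a place $\wp$.
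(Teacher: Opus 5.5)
Your proposal is correct and follows the paper's proof. You identify $\mathcal{O}_L$-stable lattices with fractional $\mathcal{O}_L$-ideals, then use the finite order of $\sigma$ to collapse the chain $\Lambda\supseteq\sigma(\Lambda)\supseteq\sigma^2(\Lambda)\supseteq\cdots$ into equalities; this gives Galois invariance because $\sigma$ generates $\mathrm{Gal}(L/K_0)$. You also justify the lattice/fractional-ideal dictionary, which the paper only asserts. One small caveat about your closing aside on completions: $L\otimes_{K_0}K_{0,\wp}$ is in general a product of local fields rather than a field, so there the invariant lattices are products of fractional ideals over the places above $\wp$.
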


\begin{proof}
    An $\mathcal{O}_{K_0}$-lattice in $L$ is 
    $\mathcal{O}_L$-invariant
    if and only if it is a fractional 
    $\mathcal{O}_L$-ideal. If $I$ is a 
    Galois invariant
    fractional $\mathcal{O}_L$-ideal, 
    then it is clearly $R$-invariant.
    On the other hand, an $R$-invariant
     fractional $\mathcal{O}_L$-ideal $I$
     satisfies $\sigma(I)\subseteq I$.
     Applying $\sigma$ to that 
     inclusion, we get a chain
     $I\supseteq \sigma(I)\supseteq
     \sigma^2(I)\supseteq\cdots$.
     The result follows since $\sigma$
     has finite order, whence all inclusions above must be equalities.
\end{proof}

\begin{prop}\label{p34}
    Let $H=H(N,M,k)$, and let $L$, $K_0$, $R$ and $\mathbb{M}\cong\mathbb{M}_r(K_0)$
    be as above. Assume that $\phi_m:H\rightarrow\mathbb{M}^*$ be a representation
    satisfying $\phi_m(a)=m_\xi$ and $\phi_m(b)=m_\lambda\sigma$, where $\xi$, $\lambda$
    and $\sigma$ are as in Lemma \ref{l24}. Then the vertices of 
    the local Bruhat-Tits at $\wp$ corresponding to the homothety classes 
    of $\phi_m(H)$-invariant lattices lie in a simplex with $e=e_\wp(L/K_0)$ vertices.
\end{prop}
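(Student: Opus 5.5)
We work locally at $\wp$, writing $F=K_{0,\wp}$ and $L_\wp=L\otimes_{K_0}F$. The latter is an étale algebra, a product of copies of a field $E$ with ramification index $e=e_\wp(L/K_0)$. We identify $F^r$ with $L_\wp$ as in Lemma \ref{l24}, so that $\mathbb{M}_\wp=F[L_\wp,\sigma]$ by Lemma \ref{l38}. The plan has three steps: show that every $\phi_m(H)$-invariant lattice is a Galois invariant fractional ideal of $\mathcal{O}_{L_\wp}$; classify those ideals up to homothety; and check that the resulting classes form a chain of the type that defines a simplex.

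First we show that an $\phi_m(H)$-invariant $\mathcal{O}_F$-lattice $\Lambda\subseteq L_\wp$ is $\mathcal{O}_{L_\wp}$-stable. Invariance under $m_\xi$ makes $\Lambda$ a module over $\mathcal{O}_F[\xi]$. The key point is that $\mathcal{O}_F[\xi]=\mathcal{O}_{L_\wp}$. This holds because $\mathbb{Z}[\xi]$ is the full ring of integers of $\mathbb{Q}(\xi)$, and $L=\mathbb{Q}(\xi)$, so its completion is $\mathcal{O}_{L_\wp}$. Next, $\phi_m(b)=m_\lambda\sigma$ with $\lambda$ a unit. Indeed $N(\lambda)=\nu$ is a root of unity, so $\lambda$ is a unit locally at $\wp$, after adjusting $\lambda$ by an element of $L_\wp^*$ if necessary; this is a point to check. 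Then $m_\lambda^{-1}\in\mathcal{O}_{L_\wp}^*$ also preserves $\Lambda$, so $\sigma\Lambda\subseteq\Lambda$. Hence $\Lambda$ is invariant under the local version of the ring $R$, and the argument of Lemma \ref{l22} applies verbatim: $\Lambda$ is a $\sigma$-invariant fractional $\mathcal{O}_{L_\wp}$-ideal. Conversely, every such ideal is $\phi_m(H)$-invariant.

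Next we classify the $\sigma$-invariant fractional ideals. Since $\langle\sigma\rangle$ acts transitively on the factors of $L_\wp$ (the invariant subalgebra is $F$), such an ideal is determined by a single exponent $t\in\mathbb{Z}$, namely it equals $\mathfrak{P}^t$ where $\mathfrak{P}$ is the product of the primes above $\wp$. These are the powers of the Galois-invariant radical $\mathfrak{P}$ of $\mathcal{O}_{L_\wp}$. Since $\mathfrak{P}^e=\pi_F\mathcal{O}_{L_\wp}$, homothety by $F^*$ shifts $t$ by multiples of $e$. So there are exactly $e$ homothety classes, represented by $\Lambda_t=\mathfrak{P}^t$ for $t=0,\dots,e-1$.

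Finally, these classes form a simplex. We have the chain
$$\pi_F\Lambda_0=\Lambda_e\subseteq\Lambda_{e-1}\subseteq\cdots\subseteq\Lambda_1\subseteq\Lambda_0,$$
which is exactly the defining condition for a simplex in $B_F$. The inclusions are strict, so the classes are distinct and the simplex has $e$ vertices.

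The main obstacle is the first step. We must verify $\mathcal{O}_F[\xi]=\mathcal{O}_{L_\wp}$, which fails if $\xi$ is replaced by a non-primitive root, but here $\xi$ is primitive. We must also confirm that $\lambda$ can be taken to be a local unit, or that $m_\lambda\sigma$-invariance of an $\mathcal{O}_{L_\wp}$-ideal still forces $\sigma$-invariance. For the latter I would try the following: apply $(m_\lambda\sigma)^r=\nu$ and compare valuations on each factor to get $v(\sigma I)=v(I)$ up to the valuation of $\lambda$, then use that these valuations sum to zero.
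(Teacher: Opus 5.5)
Your plan is the paper's proof, carried out in the completion. The paper also uses $\mathcal{O}_L=\mathcal{O}_{K_0}[\xi]$ and Lemma~\ref{l22} to identify the invariant lattices with Galois-invariant fractional ideals. It writes these as powers of $\tilde{\wp}=\mathfrak{P}_1\cdots\mathfrak{P}_t$ up to factors from $K_0$, and reads off the simplex from the chain $\wp\mathcal{O}_L=\tilde{\wp}^e\subseteq\cdots\subseteq\tilde{\wp}\subseteq\mathcal{O}_L$. However, the paper invokes Lemma~\ref{l22}, which concerns invariance under $\sigma$ itself, without saying anything about $\lambda$. In effect it takes $\lambda=1$, which is the only case used later (in the proof of Theorem~\ref{t1}, $\nu=1$ and $\lambda=1$).

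The step you flagged is a genuine gap for general $\lambda$, and both of your suggested justifications need correction. The norm condition only gives $\sum_i v_{\mathfrak{P}_i}(\lambda)=0$ over the primes $\mathfrak{P}_i\mid\wp$, not that each term vanishes. Suppose $\wp$ has $t\geq 2$ primes above it in $L$, and take $\lambda=\mu/\sigma(\mu)$ with $\mu\in L^*$ of valuation $1$ at one $\mathfrak{P}_i$ and $0$ at the others. Then $\nu=1$ but $\lambda\notin\mathcal{O}_{L_\wp}^*$. Since $m_\lambda\sigma=m_\mu\sigma m_\mu^{-1}$, the invariant lattices are the $\mu\tilde{\wp}^s$. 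These are not $\sigma$-stable, and $\mathcal{O}_{L_\wp}$ itself is not invariant. So $m_\lambda\sigma$-invariance does not force $\sigma$-invariance. Summing your valuation inequalities around the $\sigma$-cycle of factors only shows that the invariant ideals are $J\tilde{\wp}^s$ for one fixed, generally non-Galois-invariant, ideal $J$.

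The repair is to make your ``adjustment'' precise as a conjugation. Because the valuations of $\lambda$ on the factors sum to zero, you can prescribe the valuations of some $\mu\in L_\wp^*$ so that $\lambda\mu/\sigma(\mu)\in\mathcal{O}_{L_\wp}^*$. Conjugating by $m_\mu$ fixes $m_\xi$ and replaces $m_\lambda\sigma$ by $m_{\lambda\mu/\sigma(\mu)}\sigma$. It moves the invariant lattices by an automorphism of $B_F$, which preserves the property of forming a simplex with $e$ vertices. After this reduction your three steps go through as written.
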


\begin{proof}
    Note that 
    $\mathcal{O}_L=\mathcal{O}_{K_0}[\xi]$ for
    a $M$-th root of unity $\xi$.
    According to Lemma \ref{l22}, the invariant lattices are the Galois invariant fractional 
    ideals in $L$. Note that two 
    $\mathcal{O}_L$-fractional ideals in
    $\mathcal{I}_L$
    differing only by a factor in 
    $\mathcal{I}_{K_0}$
    are in the same homothety class.
    Since $L/K$ is Galois, every prime ideal $\wp$ 
    in $\mathcal{O}_K$ generates an ideal $\wp\mathcal{O}_L$ that factors 
    as $\wp\mathcal{O}_L=(\mathfrak{P}_1\cdots\mathfrak{P}_t)^e$
    in $L$, with the factors $\mathfrak{P}_i$
    in the same Galois orbit. 
    We conclude that a Galois invariant fractional 
    ideals is a product of factors of the form $\tilde{\wp}^s$ with
    $\tilde{\wp}=\mathfrak{P}_1\cdots\mathfrak{P}_t$, with one such factor for every
    ramified place of $L/K_0$.
    Furthermore, multiples of $e$ in the exponent
    can be ignored, since they define factors
    in $\mathcal{I}$. Now, it suffices to 
    note that the relation 
    $$\wp\mathcal{O}_L=\tilde{\wp}^e\subseteq\tilde{\wp}^{e-1}\subseteq\cdots
    \subseteq\tilde{\wp}\subseteq\mathcal{O}_L,$$
    proves that the corresponding local lattices are the vertices 
    of a simplex.
\end{proof}

\subparagraph{Proof of Theorem \ref{t1}.}
By Lemma 2.3, $\phi$ is conjugate to some representation $\phi_{\mathbf{O},\nu}$.
Note that the representation 
$\phi$ has dimension $n=r$. 
The condition $N=r$ proves that $\nu=1$, whence
we can take $\lambda=1$, as an element of norm
$\nu$. It follows, by Lemma \ref{l24}, that $K=K_0$ 
is the invariant field of the automorphism sending $\eta$ 
to $\eta^r$. By the proof of Lemma \ref{l24}, $\phi$ is 
conjugate to the representation $\phi_m$ in Prop. \ref{p34}.
This means that $\phi$ and $\phi_m$ have the same character,
whence they are conjugates in $\mathrm{GL}_n(K_0)$.
Then, by Prop. \ref{p34}, every maximal
order containing $\phi(H)$, locally at $\wp$,
is in a simplex with $e=e_\wp(K/L)$ vertices. 
The result follows.\qed

\section{Free lattices and representations}\label{secfl}

A free (full) lattice in an $n$-dimensional
$K$-vector space $V$ is a lattice $\Lambda$
that is isomorphic to $\mathcal{O}_K^n$ as
an $\mathcal{O}_K$-module.

\begin{prop}
    Any ring of the form $\mathcal{O}_K[\alpha]$, where $\alpha$ is integral over $\mathcal{O}_K$,
    is a free lattice.
\end{prop}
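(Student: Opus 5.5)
The plan is to show that $\mathcal{O}_K[\alpha]$ is a free $\mathcal{O}_K$-module with basis $1,\alpha,\dots,\alpha^{d-1}$, where $d$ is the degree of the minimal polynomial of $\alpha$. Here ``full lattice'' is understood inside the $K$-vector space $V=K[\alpha]$, which has $K$-dimension $d$. Let $f\in K[x]$ be the monic minimal polynomial of $\alpha$ over $K$.

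First, since $\alpha$ is integral over $\mathcal{O}_K$ and $\mathcal{O}_K$ is integrally closed (it is a Dedekind domain), the coefficients of $f$ lie in $\mathcal{O}_K$. This is the standard fact that the coefficients of $f$ are elementary symmetric functions of the conjugates of $\alpha$. Those conjugates are integral, so the coefficients are integral elements of $K$, hence in $\mathcal{O}_K$.

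Next, spanning. Any element of $\mathcal{O}_K[\alpha]$ is $g(\alpha)$ for some $g\in\mathcal{O}_K[x]$. Since $f$ is monic with coefficients in $\mathcal{O}_K$, division with remainder by $f$ works inside $\mathcal{O}_K[x]$: we can write $g=qf+s$ with $q,s\in\mathcal{O}_K[x]$ and $\deg s<d$. Hence $g(\alpha)=s(\alpha)$ is an $\mathcal{O}_K$-combination of $1,\alpha,\dots,\alpha^{d-1}$.

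Finally, linear independence. A nontrivial relation $\sum_{i<d}c_i\alpha^i=0$ with $c_i\in\mathcal{O}_K$ would give a nonzero polynomial over $K$ of degree less than $d$ vanishing at $\alpha$, contradicting minimality of $f$. So $1,\alpha,\dots,\alpha^{d-1}$ is a basis, $\mathcal{O}_K[\alpha]\cong\mathcal{O}_K^d$, and it spans $V=K[\alpha]$ over $K$, so it is a free full lattice.

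The only step that is not purely formal is the first one, that the minimal polynomial has coefficients in $\mathcal{O}_K$. It is needed so that the division algorithm does not introduce denominators. If one prefers to avoid the minimal polynomial, one can instead take any monic $h\in\mathcal{O}_K[x]$ with $h(\alpha)=0$ and note that the powers $\alpha^i$, $i<\deg h$, span. But independence would then fail unless $h$ is minimal, so integral closedness is the key input either way.
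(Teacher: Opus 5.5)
Your proposal is correct and follows the paper's argument: the paper also takes the monic irreducible polynomial of $\alpha$ and shows that $\{1,\alpha,\dots,\alpha^{n-1}\}$ spans $\mathcal{O}_K[\alpha]$, by reducing higher powers, and is linearly independent, by minimality of the degree. Your explicit check via integral closedness that this polynomial has coefficients in $\mathcal{O}_K$ supplies a step the paper leaves implicit when it speaks of ``the irreducible polynomial of $\alpha$ over $\mathcal{O}_K$''.
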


\begin{proof}
    If $f(x)=x^n+a_{n-1}x^{n-1}+\dots+a_1x+a_0$
    is the irreducible polynomial of $\alpha$
    over $\mathcal{O}_K$, then the set
    $\{1,\alpha,\dots,\alpha^{n-1}\}$ spans
    $\mathcal{O}_K[\alpha]$, since every power
    $\alpha^n,\alpha^{n+1},\dots$ can be expressed 
    as a combination of lower powers. Furthermore, 
    this set is linearly independent since
    $\alpha$ is not a root of a polynomial of lower
    degree. The result follows.
\end{proof}

\begin{cor}\label{c511}
    If $L$ is a cyclotomic field, and $K$ is any 
    subfield, then $\mathcal{O}_L$ is free as an
    $\mathcal{O}_K$-module. 
\end{cor}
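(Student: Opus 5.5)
The plan is to reduce Corollary \ref{c511} directly to the preceding Proposition. If $L=\mathbb{Q}(\zeta)$ with $\zeta$ a primitive root of unity, the classical fact $\mathcal{O}_L=\mathbb{Z}[\zeta]$ gives
$$\mathcal{O}_K[\zeta]\subseteq\mathcal{O}_L=\mathbb{Z}[\zeta]\subseteq\mathcal{O}_K[\zeta],$$
whence $\mathcal{O}_L=\mathcal{O}_K[\zeta]$. (This is the same identity already invoked in the proof of Prop. \ref{p34}, namely $\mathcal{O}_L=\mathcal{O}_{K_0}[\xi]$.)

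The element $\zeta$ is integral over $\mathcal{O}_K$, being a root of $x^m-1$. The preceding Proposition then says that $\mathcal{O}_K[\zeta]$ is free over $\mathcal{O}_K$, with basis $1,\zeta,\dots,\zeta^{d-1}$ where $d=[L:K]$. For this to apply, the minimal polynomial of $\zeta$ over $K$ must have coefficients in $\mathcal{O}_K$. That holds because $\zeta$ is integral and $\mathcal{O}_K$ is integrally closed: the coefficients of the minimal polynomial are symmetric functions of the conjugates of $\zeta$, hence algebraic integers lying in $K$.

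Finally, the resulting lattice is full in $L$, since $1,\zeta,\dots,\zeta^{d-1}$ is a $K$-basis of $L=K(\zeta)$. So $\mathcal{O}_L\cong\mathcal{O}_K^{d}$ is a free full lattice.

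The only step that is not routine is the input $\mathcal{O}_L=\mathbb{Z}[\zeta]$ for cyclotomic fields. I would quote it as a standard theorem (e.g.\ Washington, Thm.~2.6) rather than reprove it. Everything after that is a direct application of the Proposition above.
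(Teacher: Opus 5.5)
Your proposal is correct and is the paper's argument: both use the classical identity $\mathcal{O}_L=\mathbb{Z}[\zeta]=\mathcal{O}_K[\zeta]$ (cited from Washington) and then apply the preceding Proposition on rings $\mathcal{O}_K[\alpha]$. You also check that the minimal polynomial of $\zeta$ over $K$ has coefficients in $\mathcal{O}_K$, which the paper's Proposition uses without stating it.
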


\begin{proof}
    This is a consequence of the fact that the ring
    $\mathcal{O}_L$ equals $\mathbb{Z}[\eta]
    =\mathcal{O}_K[\eta]$
    for any primitive root of unity $\eta$.
    See, for instance, \cite[Prop. 1.2]{W97}.
\end{proof}

The reader should be warned, however, that this is 
not the case for an arbitrary extension of number fields.
For instance, if $K=\mathbb{Q}(\sqrt{-15})$,
and if $L=K(\omega)$, where $\omega$ is
a primitive cubic root of unity, then no
$\mathcal{O}_L$-fractional ideal is free
as an $\mathcal{O}_K$-module, since it is proven in
\cite[Ex. 9.9]{AAS} that the image of $\mathcal{O}_L$ under
the representation $\psi:L\rightarrow\mathbb{M}_2(K)$ defined 
by identifying $L$ with $K^2$ does not have any invariant free 
lattice.

The space $\bigwedge^nV$ is the space generated
by the symbols $\mathbf{v}_1\wedge\dots\wedge 
\mathbf{v}_n$, where the
wedge product $\wedge$ is linear on each variable 
and satisfies $\mathbf{v}\wedge\mathbf{w}=
-\mathbf{w}\wedge \mathbf{v}$. If $n$
equals the dimension of $V$, $\bigwedge^nV$
is a one-dimensional space. If $\Lambda$ is a full 
lattice in $V$, then $\bigwedge^n\Lambda$ is the
lattice in $\bigwedge^nV$ generated by the products
of the form 
$\mathbf{v}_1\wedge\dots\wedge\mathbf{v}_n$ with 
$\mathbf{v}_i\in\Lambda$. For any basis 
$\{\mathbf{e}_1,\dots,\mathbf{e}_1\}$, there
is a unique ideal $J$ satisfying
$\bigwedge^n\Lambda=
J(\mathbf{e}_1\wedge\dots\wedge\mathbf{e}_n)$,
which we call the determinant of $\Lambda$ with 
respect to that basis. If $J'$ is the determinant
with respect to a different basis, then
$J'=aJ$, where $a$ is the determinant of one basis
with respect to the other. In particular, the class of the
determinant is independent of the basis, and we call it the determinant
class of the lattice.

\begin{prop}\label{p52}
    A full lattice $\Lambda$ is free if and only if 
    its determinant class is the class of principal ideals.
\end{prop}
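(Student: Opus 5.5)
The forward direction is immediate. If $\Lambda$ is free with basis $\{\mathbf{w}_1,\dots,\mathbf{w}_n\}$, then $\bigwedge^n\Lambda$ is generated by $\mathbf{w}_1\wedge\dots\wedge\mathbf{w}_n$. Indeed, any wedge of $n$ vectors of $\Lambda$ expands, by multilinearity and antisymmetry, into an $\mathcal{O}_K$-multiple of $\mathbf{w}_1\wedge\dots\wedge\mathbf{w}_n$, namely the determinant of the coefficient matrix. Hence the determinant of $\Lambda$ with respect to the basis $\{\mathbf{w}_i\}$ of $V$ is $\mathcal{O}_K$ itself. By the basis-change remark preceding the statement, its determinant with respect to any other basis is a principal ideal, so the determinant class is trivial.

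For the converse I would use the structure theorem for finitely generated torsion-free modules over a Dedekind domain, which is standard (Steinitz). It says that $\Lambda\cong\mathcal{O}_K^{n-1}\oplus I$ for some fractional ideal $I$. Concretely, one can choose a basis $\{\mathbf{e}_1,\dots,\mathbf{e}_n\}$ of $V$ and fractional ideals $I_1,\dots,I_n$ with $\Lambda=I_1\mathbf{e}_1\oplus\dots\oplus I_n\mathbf{e}_n$, and $\Lambda$ is free if and only if $I_1\cdots I_n$ is principal. The plan is to show that $\bigwedge^n\Lambda=I_1\cdots I_n(\mathbf{e}_1\wedge\dots\wedge\mathbf{e}_n)$. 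Then the determinant with respect to $\{\mathbf{e}_i\}$ is $I_1\cdots I_n$, and the hypothesis that the determinant class is trivial makes this product principal.

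To verify the wedge computation, note first that the inclusion $\supseteq$ is clear, since $x_1\mathbf{e}_1\wedge\dots\wedge x_n\mathbf{e}_n=x_1\cdots x_n(\mathbf{e}_1\wedge\dots\wedge\mathbf{e}_n)$ and the products $x_1\cdots x_n$ with $x_i\in I_i$ generate $I_1\cdots I_n$. For $\subseteq$, write each $\mathbf{v}_j\in\Lambda$ as $\sum_i x_{ij}\mathbf{e}_i$ with $x_{ij}\in I_i$. The wedge $\mathbf{v}_1\wedge\dots\wedge\mathbf{v}_n$ then equals $\det(x_{ij})$ times $\mathbf{e}_1\wedge\dots\wedge\mathbf{e}_n$, and each term of this determinant lies in $I_1\cdots I_n$.

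The remaining step, and the main obstacle, is the Steinitz reduction itself: showing that $\Lambda\cong\mathcal{O}_K^{n-1}\oplus I_1\cdots I_n$, i.e.\ that a product $I_1\cdots I_n$ that is principal forces freeness. I would cite the standard isomorphism $I\oplus J\cong\mathcal{O}_K\oplus IJ$, for instance from Reiner's \emph{Maximal Orders}, and apply it repeatedly. If a self-contained argument is wanted, I would prove it by choosing $J'$ in the class of $J$ coprime to $I$, so that $I+J'=\mathcal{O}_K$. The surjection $I\oplus J'\to\mathcal{O}_K$ given by $(x,y)\mapsto x+y$ splits, and its kernel is $I\cap J'=IJ'$, which gives $I\oplus J'\cong\mathcal{O}_K\oplus IJ'$.

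Once the reduction gives $\Lambda\cong\mathcal{O}_K^{n-1}\oplus I_1\cdots I_n$ with $I_1\cdots I_n=c\,\mathcal{O}_K$ principal, the last summand is isomorphic to $\mathcal{O}_K$ via multiplication by $c^{-1}$. Therefore $\Lambda\cong\mathcal{O}_K^n$ is free.
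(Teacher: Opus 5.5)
Your proof is correct, but it takes a different route from the paper's.

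The paper argues local-to-global. It fixes a free lattice $\Lambda_0$ and chooses local matrices $\mathtt{T}_\wp$ with $\Lambda_\wp=\mathtt{T}_\wp\Lambda_{0,\wp}$, which is possible because every local lattice is free. Comparing determinants makes $\det\mathtt{T}_\wp$ a unit; this implicitly requires first rescaling $\Lambda_0$ so that its determinant ideal equals the principal determinant ideal of $\Lambda$. The paper then corrects $\det\mathtt{T}_\wp$ to $1$ using the stabilizer of $\Lambda_{0,\wp}$. Finally, strong approximation for $\mathrm{SL}_n(K)$ produces a single global $\mathtt{T}\in\mathrm{SL}_n(K)$ with $\mathtt{T}\Lambda_0=\Lambda$.

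You instead work purely module-theoretically. The pseudo-basis decomposition $\Lambda=I_1\mathbf{e}_1\oplus\cdots\oplus I_n\mathbf{e}_n$ identifies the determinant ideal with $I_1\cdots I_n$, and your wedge computation for this is correct in both inclusions. The Steinitz isomorphism $I\oplus J\cong\mathcal{O}_K\oplus IJ$ then reduces $\Lambda$ to $\mathcal{O}_K^{n-1}\oplus I_1\cdots I_n$.

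What each route buys:
\begin{itemize}
\item The paper's route fits its adelic, Bruhat--Tits viewpoint and directly yields a global change-of-basis matrix, but it relies on the heavier tool of strong approximation for $\mathrm{SL}_n$.
\item Your route needs only coprime representatives of ideal classes, i.e.\ the Chinese remainder theorem. It works verbatim over any Dedekind domain, and it shows explicitly that the paper's determinant class is the classical Steinitz class. Its cost is that you cite, rather than prove, the existence of a pseudo-basis. That step is easy: split off $f(\Lambda)$ for a nonzero functional $f$, which is possible because fractional ideals are projective, and induct on the dimension of $\ker f$.
\end{itemize}

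One small point in your self-contained sketch: first replace $I$ by an integral ideal in its class. Only then do $I+J'=\mathcal{O}_K$ and $I\cap J'=IJ'$ make sense as statements about coprime integral ideals.
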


\begin{proof}
    Necessity is immediate since a free lattice is 
    generated by a basis. For the sufficiency, we fix
    a free lattice $\Lambda_0$, say the one generated by the basis $\{\mathbf{e}_1,\dots,\mathbf{e}_1\}$.
    For any other lattice $\Lambda$, we choose, for
    every finite place $\wp$, a local matrix 
    $\mathtt{T}_\wp$ satisfying $\Lambda_\wp=
    \mathtt{T}_\wp\Lambda_{0,\wp}$. Comparing determinants, 
    we prove that the determinant of
    $\mathtt{T}_\wp$ is a unit. Since
    the stabilizer in $\mathrm{GL}_n(K_\wp)$
    of the local lattice $\Lambda_{0,\wp}$ has a
    matrix with determinant $u$ for an arbitrary 
    unit $u$, we can assume that the determinant of
    $\mathtt{T}_\wp$ is one. Now, the result follows 
    from the strong approximation theorem for 
    $\mathrm{SL}_n(K)$.
\end{proof}

\begin{prop}\label{p53}
    If $J$ is a fractional ideal in $L$, then its
    determinant class, as an $\mathcal{O}_K$-lattice
    is $N_{L/K}(J)I_0$, where $I_0$ is the determinant class
    of $\mathcal{O}_L$.
\end{prop}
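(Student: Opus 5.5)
The plan is to reduce the statement to a local computation at each finite place and then read off the determinant ideal from its local components. A fractional $\mathcal{O}_K$-ideal is determined by its completions, and the determinant is compatible with completion: if $\bigwedge^r J = D(J)\,(\mathbf{e}_1\wedge\dots\wedge\mathbf{e}_r)$ for a fixed $K$-basis $\{\mathbf{e}_i\}$ of $L$, then $D(J)_\wp$ is the determinant of the local lattice $J_\wp=J\otimes\mathcal{O}_{K,\wp}$. So it suffices to prove, for every $\wp$, the identity of local ideals
$$D(J)_\wp=N_{L/K}(J)_\wp\,D(\mathcal{O}_L)_\wp .$$
Once this holds everywhere, $D(J)=N_{L/K}(J)D(\mathcal{O}_L)$ as ideals, which is stronger than the claimed identity of classes.

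The local step runs as follows.
\begin{enumerate}
\item Over $\mathcal{O}_{K,\wp}$ the semilocal Dedekind ring $\mathcal{O}_{L}\otimes\mathcal{O}_{K,\wp}$ is a principal ideal domain. Hence $J_\wp=\alpha\,\mathcal{O}_{L,\wp}$ for some $\alpha\in L\otimes K_\wp$ with $\alpha$ invertible.
\item The multiplication map $m_\alpha$ is a $K_\wp$-linear automorphism of $L\otimes K_\wp$ carrying $\mathcal{O}_{L,\wp}$ onto $J_\wp$. Therefore $\bigwedge^r J_\wp=\det(m_\alpha)\bigwedge^r\mathcal{O}_{L,\wp}$.
\item Use the standard fact that $\det(m_\alpha)=N_{L/K}(\alpha)$, where the norm is extended to $L\otimes K_\wp=\prod_{\mathfrak P|\wp}L_{\mathfrak P}$ as the product of local norms.
\item Use also that the ideal norm is computed locally, so $N_{L/K}(J)_\wp=N_{L/K}(\alpha)\mathcal{O}_{K,\wp}$.
\item Combining these gives $D(J)_\wp=N_{L/K}(\alpha)D(\mathcal{O}_L)_\wp=N_{L/K}(J)_\wp D(\mathcal{O}_L)_\wp$.
\end{enumerate}

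There is an alternative, global route that avoids localization. By multiplicativity, write $J$ as a product of prime ideals and prove $D(\mathfrak{P}I)=N(\mathfrak{P})D(I)$ for every prime $\mathfrak{P}$. Since $I/\mathfrak{P}I$ is a one-dimensional vector space over the field $\mathcal{O}_L/\mathfrak{P}$, it has $\mathcal{O}_K$-length $f(\mathfrak{P}|\wp)$ at $\wp$. The index $[I:\mathfrak{P}I]$ then equals $\wp^{f}=N(\mathfrak{P})$, and the determinant of a sublattice differs from that of the lattice by exactly this index ideal. The same argument handles inverses. I would present the local argument and mention this one only as a check.

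The main obstacle is the bookkeeping identifying the determinant of the multiplication map with the ideal norm, including the case where $\wp$ splits in $L$. I would handle it by diagonalizing over $\prod_{\mathfrak P}L_{\mathfrak P}$: there $m_\alpha$ is block diagonal, and each block contributes the local norm $N_{L_{\mathfrak P}/K_\wp}(\alpha_{\mathfrak P})$, which generates $\wp^{f\,v_{\mathfrak P}(\alpha)}$. This matches the definition of $N_{L/K}$ on prime ideals.
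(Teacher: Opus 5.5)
Your proposal is correct and is essentially the paper's argument. The paper also chooses, at each $\wp$, a local generator of $J$ (the components $a_\mathfrak{P}$ of an id\`ele), uses $L\otimes_K K_\wp\cong\bigoplus_{\mathfrak{P}\mid\wp}L_\mathfrak{P}$, and reads off the local determinant as $\prod_{\mathfrak{P}\mid\wp}N_{L_\mathfrak{P}/K_\wp}(a_\mathfrak{P})$ times that of $\mathcal{O}_L$; your identity $\det(m_\alpha)=N(\alpha)$ just makes explicit a step the paper leaves implicit.

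One wording slip: if $\mathcal{O}_{K,\wp}$ means the completion, as your use of $\prod_{\mathfrak{P}\mid\wp}L_\mathfrak{P}$ suggests, then $\mathcal{O}_L\otimes\mathcal{O}_{K,\wp}\cong\prod_{\mathfrak{P}\mid\wp}\mathcal{O}_{L_\mathfrak{P}}$ is not a domain when $\wp$ splits. Its full fractional ideals are still generated by units of $\prod_{\mathfrak{P}\mid\wp}L_\mathfrak{P}$, so step 1 holds as stated.
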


\begin{proof}
Choose an $L$-id\`ele $a=(a_\mathfrak{P})_\mathfrak{P}$ satisfying 
$J_\mathfrak{P}=a_\mathfrak{P}\mathcal{O}_\mathfrak{P}$ for every place
$\mathfrak{P}$ of $L$. Recall that, for every place $\wp$ of $K$,
the tensor product $L\otimes_{K}K_\wp$ can be identified with the direct
sum $\bigoplus_{\mathfrak{P}|\wp}L_\mathfrak{P}$, over the places of
$L$ lying over $\wp$. Under this identification, the completion at
$\wp$ of the lattice $J$ corresponds to the product
$\bigoplus_{\mathfrak{P}|\wp}a_\mathfrak{P}\mathcal{O}_\mathfrak{P}$,
whose determinant class is $\left(\prod_{\mathfrak{P}|\wp}
N_{L_\mathfrak{P}/K_\wp}(a_\mathfrak{P})\right)I_{0\wp}$.
The result follows.
\end{proof}

\begin{prop}\label{p44}
    Let $\phi:G\rightarrow\mathrm{GL}_n(K)$ be an absolutely irreducible 
    representation of a finite group $G$. Then there is an $h_k(n)$-to-one
    map from the set of integral forms of $\phi$ to the set of
    $\mathcal{O}_K$-maximal orders containing $\phi(G)$ that are 
    $\mathrm{GL}_n(K)$-conjugate to $\mathbb{M}_n(\mathcal{O}_K)$.
\end{prop}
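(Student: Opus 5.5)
Define the map by sending an integral form to the maximal order of its lattice, and study its fibres. An integral form is a $\mathrm{GL}_n(\mathcal{O}_K)$-conjugacy class of representations $\psi=\mathtt{g}\phi\mathtt{g}^{-1}$ with $\mathtt{g}\in\mathrm{GL}_n(K)$ and $\psi(G)\subseteq\mathrm{GL}_n(\mathcal{O}_K)$. Equivalently, $\phi(G)$ stabilizes the free lattice $\Lambda=\mathtt{g}^{-1}\mathcal{O}_K^n$. I would therefore regard an integral form as the $C(\phi)$-orbit of a $\phi(G)$-invariant free lattice, where $C(\phi)$ is the centralizer of $\phi(G)$ in $\mathrm{GL}_n(K)$. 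Two matrices $\mathtt{g},\mathtt{g}'$ give $\mathrm{GL}_n(\mathcal{O}_K)$-conjugate representations iff $\mathtt{g}'\mathtt{g}^{-1}\mathtt{u}\in C(\phi)$ for some $\mathtt{u}$, i.e. iff $\mathtt{g}'^{-1}\mathcal{O}_K^n=\mathtt{c}\,\mathtt{g}^{-1}\mathcal{O}_K^n$ with $\mathtt{c}\in C(\phi)$. By absolute irreducibility and Schur's lemma, $C(\phi)=K^*\mathtt{1}$. So integral forms are in bijection with invariant free lattices modulo scalars $K^*$.

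The map is then $[\Lambda]\mapsto\mathrm{End}_{\mathcal{O}_K}(\Lambda)$. Its image is exactly the set of maximal orders containing $\phi(G)$ that are $\mathrm{GL}_n(K)$-conjugate to $\mathbb{M}_n(\mathcal{O}_K)$. Indeed, $\mathrm{End}(\mathtt{g}^{-1}\mathcal{O}_K^n)=\mathtt{g}^{-1}\mathbb{M}_n(\mathcal{O}_K)\mathtt{g}$. Conversely, such an order is $\mathrm{End}$ of a free lattice, and that lattice is invariant because the order contains $\phi(G)$.

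For the fibres, the facts recalled in Section 3 give $\mathrm{End}(\Lambda)=\mathrm{End}(\Lambda')$ iff $\Lambda'=J\Lambda$ for a fractional ideal $J$ of $K$, and $J\Lambda$ is automatically $\phi(G)$-invariant. By Prop. \ref{p52}, with $\Lambda$ free, $J\Lambda$ is free iff its determinant class $J^n\cdot[\det\Lambda]=[J^n]$ is trivial, i.e. iff the class of $J$ lies in $\mathcal{G}_K(n)$. Modulo $K^*$, the lattices $J\Lambda$ and $J'\Lambda$ coincide iff $J'J^{-1}$ is principal. To justify this I would show that $x\Lambda=\Lambda$ for $x\in K^*$ forces $x\in\mathcal{O}_K^*$, for instance by comparing determinant ideals, since $x^n\mathcal{O}_K=\mathcal{O}_K$ gives $x$ a unit. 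The fibre over each order is therefore in bijection with $\mathcal{G}_K(n)$, which has $h_K(n)$ elements.

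The main obstacle is the bookkeeping in the first step: getting the identification of integral forms with invariant lattices modulo the centralizer exactly right, and making sure Schur's lemma applies over $K$, which uses absolute irreducibility. Once that is in place, the fibre count is a routine application of Prop. \ref{p52}.
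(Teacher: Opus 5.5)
Your proposal is correct and is essentially the paper's argument, phrased in terms of lattices rather than cosets. The paper counts $N/K^*\mathrm{GL}_n(\mathcal{O}_K)$, where $N$ is the normalizer of $\mathbb{M}_n(\mathcal{O}_K)$, and identifies $N/\mathrm{GL}_n(\mathcal{O}_K)$ with the ideals $J$ with $J^n$ principal via $\mathbf{N}\mathcal{O}_K^n=J\mathcal{O}_K^n$; these are exactly your free lattices $J\Lambda$. Both arguments use the same two inputs: Schur's lemma, which makes the centralizer $K^*$, and Prop.~\ref{p52}.

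One step should be stated more carefully: the injectivity of $J\mapsto J\Lambda$ requires $I\Lambda=\Lambda\Rightarrow I=\mathcal{O}_K$ for an arbitrary fractional ideal $I$, not just a principal one. Your determinant comparison gives this directly, since it yields $I^n=\mathcal{O}_K$.
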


\begin{proof}
    Let $\phi'$ be a conjugate of $\phi$, say 
    $\phi'(g)=\mathbf{T}\phi(g)\mathbf{T}^{-1}$ , where 
    $\mathbf{T}\in\mathrm{GL}_n(K)$. Then $\phi'(G)\subseteq
    \mathrm{GL}_n(\mathcal{O}_K)$, or equivalently, $\phi'(G)\subseteq
    \mathbb{M}_n(\mathcal{O}_K)$, precisely when $\phi(G)\subseteq
    \mathbf{T}^{-1}\mathbb{M}_n(\mathcal{O}_K)\mathbf{T}$. Furthermore,
    $\mathbf{T}^{-1}\mathbb{M}_n(\mathcal{O}_K)\mathbf{T}=
    \mathbf{S}^{-1}\mathbb{M}_n(\mathcal{O}_K)\mathbf{S}$, precisely when
    $\mathbf{N}=\mathbf{S}\mathbf{T}^{-1}$ belongs to the normalizer $N$ 
    of the order $\mathbb{M}_n(\mathcal{O}_K)$. In other words, 
    two integral representations 
    $\phi'(g)=\mathbf{T}\phi(g)\mathbf{T}^{-1}$ and
    $\phi''(g)=\mathbf{S}\phi(g)\mathbf{S}^{-1}$ correspond to the same
    maximal order if and only if they satisfy 
    $\phi''(g)=\mathbf{N}\phi'(g)\mathbf{N}^{-1}$ for an element 
    $\mathbf{N}\in N$. This means that $N$ act on the set of 
    representations defining the same maximal order by conjugation.
    Since the normalizer of any absolutely irreducible representation 
    is the group $K^*$ of scalar matrices, the representations $\phi'$
    and $\phi''$ are $\mathrm{GL}_n(\mathcal{O}_K)$-conjugates if and
    only if $\mathbf{N}\in K^*\mathrm{GL}_n(\mathcal{O}_K)$. 
    It suffices, therefore, to prove that the quotient 
    $N/K^*\mathrm{GL}_n(\mathcal{O}_K)$ has $h_K(n)$ elements.
    
    Note that $\mathbf{N}\in N$ if and only if
    $\mathbf{N} \mathcal{O}_K^n=J\mathcal{O}_K^n$ for some fractional ideal
    $J=J(\mathbf{N})\subseteq K$. This defines a map from $N$ to the ideal 
    group of $K$ whose kernel is the stabilizer of the lattice 
    $\mathcal{O}_K^n$, i.e., the group $\mathrm{GL}_n(\mathcal{O}_K)$.
    We claim that the image $\mathcal{J}$ coincides with the set $\mathcal{J}'$
    of the ideals $J$ whose $n$-th power $J^n$ is principal. This implies that
    $N/K^*\mathrm{GL}_n(\mathcal{O}_K)$ is isomorphic to 
    $\mathcal{G}_K(n)$, and the result follows.

    To prove the claim, we take the determinant of both sides of the identity 
    $\mathbf{N} \mathcal{O}_K^n=J\mathcal{O}_K^n$. We 
    obtain that $\det(\mathbf{N})\in K^*$ generates the ideal $J^n$.
    This proves $\mathcal{J}\subseteq\mathcal{J}'$. To prove 
    $\mathcal{J}'\subseteq\mathcal{J}$, we observe that, if $J^n$ is principal, 
    the lattice $J\mathcal{O}_K^n$ is free since its determinant class is 
    principal, and therefore there is a matrix satisfying 
    $\mathbf{N} \mathcal{O}_K^n=J\mathcal{O}_K^n$. Furthermore,
    $\mathbf{N}\mathbb{M}_n(\mathcal{O}_K)\mathbf{N}^{-1}$ is the endomorphism
    ring of the latter lattice, and therefore it coincides with
    $\mathbb{M}_n(\mathcal{O}_K)$. We conclude that $\mathbf{N}\in N$.
    The result follows.
\end{proof}

\begin{prop}
    Let $\phi:G\rightarrow\mathfrak{D}_\Lambda^*\subseteq\mathrm{GL}_2(K)$ 
    be an absolutely irreducible representation of a finite group $G$,
    where $\mathfrak{D}_\Lambda$ is a maximal order corresponding to a 
    lattice class $[\Lambda]$. Then the number maximal orders containing
    $\phi(G)$ that are endomorphism rings of $\wp$-neighbors of $[\Lambda]$ 
    equals the number of invariant subspaces of $\Lambda/\wp\Lambda$ under 
    the representation $\bar{\phi}:G\rightarrow\bar{\mathfrak{D}}^*$,
    where $\bar{\mathfrak{D}}=\mathfrak{D}_\Lambda/\wp\mathfrak{D}_\Lambda$.
\end{prop}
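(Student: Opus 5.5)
We would prove this by setting up an explicit bijection between neighbors of $[\Lambda]$ in the local building and proper non-trivial subspaces of $\Lambda/\wp\Lambda$, and then checking that $\phi(G)$-invariance corresponds on both sides. Everything is local: maximal orders containing $\phi(G)$ whose completions differ from $\mathfrak{D}_\Lambda$ only at $\wp$ correspond to lattice classes that agree with $[\Lambda]$ away from $\wp$. So we may replace $K$ by $K_\wp$, $\mathcal{O}_K$ by $\mathcal{O}_\wp$, and $\Lambda$ by $\Lambda_\wp$, and use the coherence statement for families of local lattices recalled in \S3 to pass back to global lattices.

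\emph{Step 1: neighbors and subspaces.} By the definition of simplices in $B_F$, a vertex $[\Lambda']\neq[\Lambda]$ is a neighbor of $[\Lambda]$ exactly when, after rescaling, $\pi\Lambda\subsetneq\Lambda'\subsetneq\Lambda$, with $\pi$ a uniformizer at $\wp$. The representative $\Lambda'$ with these inclusions is unique in its homothety class, since any other rescaling $\pi^t\Lambda'$ with $t\neq 0$ violates one of them. Hence $\Lambda'\mapsto W=\Lambda'/\pi\Lambda$ is a bijection between neighbors and proper non-trivial $\mathbb{F}_\wp$-subspaces $W\subseteq\Lambda/\pi\Lambda$. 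In dimension $2$ these are the $q+1$ lines.

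\emph{Step 2: invariance.} Since $\phi(G)\subseteq\mathfrak{D}_\Lambda^*$, every $\phi(g)$ preserves $\Lambda$ and $\pi\Lambda$, so it induces $\bar\phi(g)$ on $\Lambda/\pi\Lambda$. It follows that $\phi(g)\Lambda'\subseteq\Lambda'$ if and only if $\bar\phi(g)W\subseteq W$. Moreover, $\mathrm{End}(\Lambda')$ contains $\phi(G)$ exactly when $\Lambda'$ is $\phi(G)$-invariant, because the group generates the order and invariance under the group is invariance under the order it spans.

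\emph{Step 3: orders versus lattice classes.} We must check that distinct neighbor classes give distinct maximal orders. This holds because $\mathrm{End}(\Lambda')=\mathrm{End}(\Lambda'')$ forces $\Lambda''=J\Lambda'$ by the fact recalled in \S3, so the classes coincide. Combining Steps 1--3, the neighbors whose endomorphism rings contain $\phi(G)$ correspond bijectively to the non-trivial $\bar\phi$-invariant subspaces of $\Lambda/\wp\Lambda$.

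The main obstacle is the bookkeeping between global and local objects. We must confirm that the global lattice obtained by modifying $\Lambda$ only at $\wp$ is still a legitimate maximal order in the count, i.e.\ the statement counts all maximal orders rather than only those conjugate to $\mathbb{M}_2(\mathcal{O}_K)$. We must also settle what "invariant subspaces" means: the trivial subspaces $0$ and $\Lambda/\wp\Lambda$ correspond to $[\Lambda]$ itself and are to be excluded. Absolute irreducibility plays no role beyond ensuring that $\phi(G)$ spans the algebra. Finally, the argument works verbatim in any dimension $n$, which is how it feeds into the count $c_\wp$ of Theorem~\ref{t0}.
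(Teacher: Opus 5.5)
Your proposal is correct and follows essentially the same route as the paper: take the unique representative with $\pi_\wp\Lambda\subseteq\Lambda'\subseteq\Lambda$, identify it with the subspace $\Lambda'/\pi_\wp\Lambda$, and observe that $\phi(G)$-invariance of $\Lambda'$ is equivalent to $\bar\phi(G)$-invariance of that subspace. Your extra bookkeeping makes explicit several points the paper's proof leaves implicit: excluding the two trivial subspaces, which both give $[\Lambda]$; checking that distinct classes give distinct orders; and noting that nothing depends on $n=2$. This matches how the proposition is used to define $c_\wp$ in Theorem~\ref{t0}.
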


\begin{proof}
    If $\mathfrak{D}_{\Lambda'}$ is the endomorphism ring of a lattice
    $\Lambda'$ whose homothety class $[\Lambda']$ is a neighbor of 
    $[\Lambda]$, then we can uniquely choose the representatives in a way that
    \begin{equation}\label{hcr}
        \pi_\wp\Lambda\subseteq\Lambda'\subseteq\Lambda.
    \end{equation}
    In this case $\Lambda'/\pi_\wp\Lambda$ is a subspace of 
    $\Lambda/\pi_\wp\Lambda$, that is invariant under $\bar{\phi}(G)$. A 
    lattice $\Lambda'$ satisfying Eq.~(\ref{hcr}) is uniquely determined by 
    the quotient $\Lambda'/\pi_\wp\Lambda$. Conversely, any subspace of 
    $\Lambda/\pi_\wp\Lambda$ is a quotient $\Lambda'/\pi_\wp\Lambda$ for some 
    lattice $\Lambda'$ satisfying Eq. (\ref{hcr}). If $\Lambda'/\pi_\wp\Lambda$ 
    is invariant under $\bar{\phi}(G)$, then $\Lambda'$ is invariant under 
    $\phi(G)$. The result follows.
\end{proof}

\subparagraph{Proof of Theorem \ref{t0}.}
    It follows from Theorem \ref{t1} that at every local place
    $\wp$, the set of vertices of the Bruhat-Tits building corresponding to
    maximal orders containing $\phi(H)$ lie on a simplex. The same holds for
    $\phi(G)$, as the orders containing $\phi(G)$ form a subset of the set
    of orders containing $\phi(H)$. In particular, if we fix one such order
    $\mathfrak{D}$, corresponding to a vertex $v$ at $\wp$, then all local 
    maximal orders containing $\phi(G)$ correspond to vertices that are
    neighbors of $v$. Then, the number of these vertices equals the number
    $c_\wp$ of invariant subspaces of the residual representation 
    $\bar\phi$ as above. This in particular implies that $c_\wp$ is
    independent of the choice of $\mathfrak{D}$, since $1+c_\wp$ is
    the number of vertices in the simplex. The number 
    $\prod_\wp(1+c_\wp)$ is therefore the number of maximal orders 
    containing
    $\phi(G)$. If $c$ is the number of those that are conjugate to
    $\mathbb{M}_n(\mathcal{O}_K)$, the result follows from 
    Proposition \ref{p44}.
\qed

\subparagraph{Proof of Theorem \ref{t2}.}
    The total number of local maximal orders containing $\phi(G)$ equals 
    $c_\wp+1$, at every local place $\wp$. By Th. \ref{t1}, 
    this number 
    equals $e_\wp(L/K)$. This means that the number of 
    global maximal orders
    containing $\phi(G)$ equals $\prod_\wp e_\wp(L/K)=
    |\mathcal{I}_L^{\mathrm{Gal}(L/K)}/\mathcal{I}_K|$, where
    $\mathcal{I}_L^{\mathrm{Gal}(L/K)}$ denotes the group of Galois-invariant
    ideals. Reasoning as in the
    proof of Prop. \ref{p34}, we conclude that the associated lattices
    corresponding to these orders can be identified with the 
    fractional ideals
    in a set of representatives of the quotient 
    $\mathcal{I}_L^{\mathrm{Gal}(L/K)}/\mathcal{I}_K$. Choose a fractional
    ideal $J\in\mathcal{I}_L^{\mathrm{Gal}(L/K)}$. Then its determinant is
    $N_{L/K}(J)=J^n\in\mathcal{I}_K$. Hence, $J$ is a 
    free lattice if and only
    if $J^nI_0$ is a principal ideal, according to Prop. 
    \ref{p52} and Prop.
    \ref{p53}. On the other hand, Cor. \ref{c511} 
    and Prop. \ref{p52} tell us 
    that $I_0$ is principal, as it is the determinant class
    of $\mathcal{O}_L$ as an $\mathcal{O}_K$-module. It follows that $J$ is a 
    free lattice if and only if we have $J^n\in\mathcal{P}_K$, 
    i.e., $J\in\mathcal{Q}_{L/K}$. Note
    that $\mathcal{Q}_{L/K}\subseteq\mathcal{I}_L^{\mathrm{Gal}(L/K)}$, as 
    $n$-th roots of fractional ideals are unique. The maximal order
    corresponding to a fractional ideal 
    $J\in\mathcal{I}_L^{\mathrm{Gal}(L/K)}$, therefore, is
    a conjugate of $\mathbb{M}_n(\mathcal{O}_K)$ precisely 
    when there exists an
    ideal $I\in\mathcal{I}_K$ such that $JI\in\mathcal{Q}_{L/K}$.  Then 
    the number $c$ of relevant maximal orders, as in 
    Th. \ref{t0}, is the order
    of the group $\mathcal{Q}_{L/K}\mathcal{I}_K/\mathcal{I}_K$, 
    or equivalently, the order of 
    $\mathcal{Q}_{L/K}/(\mathcal{Q}_{L/K}\cap\mathcal{I}_K)$.
    Note that the latter intersection is precisely the 
    group $\mathcal{I}_K(n)$
    of ideals whose $n$-th power is principal. Then the result follows from
    Prop. \ref{p44}, since
    $$|\mathcal{Q}_{L/K}/\mathcal{P}_K|=
    |\mathcal{Q}_{L/K}/\mathcal{I}_K(n)|
    |\mathcal{I}_K(n)/\mathcal{P}_K|=h_K(n)
    |\mathcal{Q}_{L/K}/\mathcal{I}_K(n)|.$$
    \qed

\section{Examples}\label{secex}

For the examples in this section, by convention we denote
matrices by uppercase boldface letters, while vectors are
lower case boldface letters. Note that the extension field
$L$ is often identified with the space of vectors, as much as
a subring of the matrix ring, so notations are chosen accordingly.
Places in number fields are denoted as boldface versions of the 
rational prime they contain, with a suitable subindex. 

\begin{ex}\label{e0}
    Let $H=H(4,15,2)$, and let 
    $\phi=\phi_{\mathbf{O},1}$ with
    $\mathbf{O}=\{\eta,\eta^2,\eta^4,\eta^8\}$, where $\eta$ is a
    root of unity of order $15$.
    This representation is defined over the field
    $K=\mathbb{Q}(\eta+\eta^2+\eta^4+\eta^8)=
    \mathbb{Q}(\sqrt{-15})$. We claim that the only place that ramifies 
    over the extension $L/K$, for $L=\mathbb{Q}(\eta)$, is the place
    $\mathbf{5}_K=(5,\sqrt{-15})$ lying over $5$.
    In fact, $L/\mathbb{Q}$ is ramified only at $3$ and $5$, and the local 
    ramification indices can be computed by considering the sub-fields 
    generated by the roots of unity of degree $3$ and $5$. This gives
    $e_3(L/\mathbb{Q})=2$ and $e_5(L/\mathbb{Q})=4$. Comparing those with
    $e_3(K/\mathbb{Q})=2$ and $e_5(K/\mathbb{Q})=2$, the result follows.
    Note that the fact that there is a unique place over $5$ follows since
    this is the case for the sub-field $F=\mathbb{Q}(\sqrt{-3})$, and $L/F$
    is fully ramified at the place over $5$.
    Since $L$ has a unique place over $\mathbf{5}_K$, denoted 
    $\mathbf{5}_L$, and the ramification index equals $2$, there
    are precisely two maximal orders, $\mathfrak{D}$
    and $\mathfrak{D}'$ containing the
    image of $\phi$ in this case, and they are 
    $\mathbf{5}_K$-neighbors. In fact, one corresponds to
    the trivial fractional ideal of $L$ and the other to 
    $J=\mathbf{5}_L$. Since $N_{L/K}(J)=J^4\cap K=\mathbf{5}_K^2$,
    the local distance at $\mathbf{5}_K$ between $\mathfrak{D}$ and 
    $\mathfrak{D}'$ is the square of the Frobenius map at that place, 
    and therefore
    it is trivial. We conclude that both orders, $\mathfrak{D}$ and
    $\mathfrak{D}'$, are conjugates of $\mathbb{M}_4(\mathcal{O}_K)$,
    and therefore each corresponds to $h_K(2)=2$ integral forms.
\end{ex}

\begin{ex}\label{e1}
   Consider the group $H(3,7,2)$, and the representation
   $\phi=\phi_{\mathbf{O},1}$, where 
   $\mathbf{O}=\{\eta,\eta^2,\eta^4\}$, with $\eta=e^{2\pi i/7}$. 
   In this case $K=\mathbb{Q}(\sqrt{-7})$ has class number $1$, 
   and the extension $L/K$ is fully ramified at the place 
   $\mathbf{7}_1=(\sqrt{-7})$, and unramified at 
   all other places. We conclude as before that the representation $\phi$
   has $3$ integral forms.

  Note that the local field $\mathbb{Q}_7$ contains the primitive
  cubic roots of unity, since $\mathbb{Q}[i]/\mathbb{Q}$ splits at $7$.
  Furthermore, for any matrix $\mathtt{U}$
  with minimal polynomial $X^3-1$, the algebra $\mathbb{Q}_7[\mathtt{U}]$
  is isomorphic to the product algebra $\mathbb{Q}_7^3$,
  and the order generated by $\mathtt{U}$ contains the idempotents
  $\frac13(\mathtt{1}+\mathtt{U}+\mathtt{U}^2)$,
  $\frac13(\mathtt{1}+\omega\mathtt{U}+\omega^2\mathtt{U}^2)$
  and $\frac13(\mathtt{1}+\omega^2\mathtt{U}+\omega\mathtt{U}^2)$,
  since $\frac13$ is an integer in $\mathbb{Q}_7$. We conclude that
  all maximal orders containing $\mathtt{U}$ correspond to the vertices in
  the same apartment. In fact, the same result holds over any 
  extension. We 
  conclude that the maximal $\mathcal{O}_{K'}$-orders containing $\phi(H)$
  correspond to the vertices in a convex polygon contained 
  in an apartment, 
  at any place over $7$, for any field extension $K'/K$.
  The local geometry at the corners of the simplex for
  $K_{\mathbf{7}_1}$ shows that the corresponding residual 
  representation always has one invariant one-dimensional subspace 
  and one invariant two-dimensional subspace.
  We conclude that the same holds for any extension.
  In other words, the local branch is always a triangle
  with the same vertices. Of course, if $K'/K$ is ramified at the
  places over $7$, the triangle contains more than one simplex.
  Fig.~\ref{nbh1} illustrates the example in which 
  $K'=\mathbb{Q}(\sqrt[6]{-7})$.
  \begin{figure}[h!]\label{f1}
\begin{center}
\begin{tikzpicture}[scale=1]
\draw (0,0)--(3,0); \filldraw (0,0) circle(1.5pt); 
\filldraw (1,0) circle(1.5pt); \filldraw (2,0) circle(1.5pt); 
\filldraw (3,0) circle(1.5pt);
\draw (1,0) -- ++(60:2);\draw (2,0) -- ++(120:2);
\draw (1,0) -- ++(120:1);\draw (2,0) -- ++(60:1);
\filldraw (1.5,0.877) circle(1.5pt);
\draw (0.5,0.877) -- ++(0:2);
\filldraw (0.5,0.877) circle(1.5pt);\filldraw (2.5,0.877) circle(1.5pt);
\draw (0,0) -- ++(60:3);\draw (3,0) -- ++(120:3);
\draw (1,1.74)-- ++(0:1); \filldraw (1,1.74) circle(1.5pt); 
\filldraw (2,1.74) circle(1.5pt);
\filldraw (1.5,2.63) circle(1.5pt);
\node [above] at (1.5,0.78) {$\phantom{xxx}{}_{{}_{w_0}}$};
\end{tikzpicture}
\end{center}
\caption{The vertices corresponding to maximal orders
containing the representation $H$ in Example \ref{e1}
over the field $\mathbb{Q}(\sqrt[6]{-7})$.}\label{nbh1}
\end{figure}
In fact, it can be seen that the number of such maximal orders
is $\prod_{i=1}^N(e_i^2+e_i+1)$, where $e_1,\dots,e_N$ are the
ramification degrees of the places of $K´$ lying over 
$\mathbf{7}_1$.   
\end{ex}

\begin{ex}\label{e2}
   Let $K=\mathbb{Q}(\sqrt{-3})$, and let $L$ be the field generated 
   by the ninth-roots of unity. Then the extension $L/K$ is fully 
   ramified at the unique place $\mathbf{3}_1=(u)$ over $3$, where
   $u=\omega-1$, and it is unramified elsewhere.
   Let $\eta\in L$ be the primitive ninth root of unity,
   so that $w=\eta-1$ is a uniformizing parameter of
   the unique place of $L$ over $\mathbf{3}_1$.
   Consider the group $H=H(3,9,3)$ and the representation
   $\phi=\phi_{\mathbf{O},1}$, with 
   $\mathbf{O}=\{\eta,\eta^4,\eta^7\}$. 
   We denote by $\mathtt{H}$ the matrix corresponding to multiplication 
   by $\eta$, and by $\mathtt{E}$ the matrix corresponding to the 
   generator $\epsilon$ of the Galois group.
   Again, there are exactly $3$ integral representations that are 
   $K$-conjugate to $\phi$ and $K$ contains the cubic roots
   of unity, however, the matrix $\mathtt{E}$ with minimal polynomial
   $X^3-1$ no longer generates the ring of integers of
   the corresponding \'etale algebra. For instance,
   the idempotents  $\frac13(\mathtt{1}+\mathtt{E}+\mathtt{E}^2)$,
  $\frac13(\mathtt{1}+\omega\mathtt{E}+\omega^2\mathtt{E}^2)$
  and $\frac13(\mathtt{1}+\omega^2\mathtt{E}+\omega\mathtt{E}^2)$
  are integral, but they fail to belong to the ring
  $\mathcal{O}_{\mathbf{3}_1}[\mathtt{E}]$.
  The three lattices that are invariant over
  the ring $R=\mathcal{O}_{\mathbf{3}_1}[\mathtt{H},\mathtt{E}]$
  have, respectively, bases $\{\mathbf{o},\mathbf{w},\mathbf{w}^2\}$,
  $\{u\mathbf{o},\mathbf{w},\mathbf{w}^2\}$,
  and $\{u\mathbf{o},u\mathbf{w},\mathbf{w}^2\}$,
  where $\mathbf{o}$, $\mathbf{w}$ and $\mathbf{w}^2$
  denote the vector corresponding to $1$, $w$ and $w^2$.
  By passing to a ramified abelian cubic extension like
  $K'=\mathbb{Q}[\sqrt[6]3]$, the lattices invariant under this ring still 
  correspond to the vertices of the complex in Fig.~\ref{nbh1}. However,
  since the matrix $\mathtt{W}=\mathtt{H}-\mathtt{1}$, which denotes 
  multiplication by $w$, satisfies the hypotheses of Lemma
  \ref{vint} for a different basis vector on the lattice corresponding
  to each vertex, we conclude that $\mathtt{H}$ acts trivially on
  the residual space of the lattice $\Lambda'$ corresponding to the 
  central vertex $w_0$. Since $\mathtt{E}$
  acts as the generator of the Galois group, it acts trivially
  on the residual space of the lattice with basis 
  $\{\mathbf{o},\mathbf{w},\mathbf{w}^2\}$, which corresponds
  to the ring of integers $\mathcal{O}_L$, and therefore also
  on $\Lambda'$ by another application of Lemma \ref{vint}.
  We conclude that every neighbor of the central vertex
  corresponds to a lattice that is invariant under $\phi(H)$.
  In this case there are, therefore, integral forms that correspond to 
  vertices outside the apartment.
\end{ex}

\begin{ex}
Consider the group $H(6,3,-1)$, and the representation
$\phi=\phi_{\mathbf{O},\nu}$, where $\mathbf{O}=\{\omega,\omega^2\}$
and $\nu=\omega=e^{2\pi i/3}$. Then the character $\chi_\phi$ has 
$K=\mathbb{Q}(\sqrt{-3})$ as its field of definition. 
Since $\phi$ has already coefficients in $K$, as given 
in Eq. (\ref{rgen}), we just compute the maximal orders
containing $\phi$ using the method described in \cite{AAS}. Let
$$\mathtt{R}=\phi(a)=\sbmattrix \omega00{\omega^2},\qquad
\mathtt{S}=\phi(b)=\sbmattrix 01\omega0.$$
The eigenvalues of $\mathtt{S}$ are $\pm\omega^2$.
Since $\omega^2-(-\omega^2)=2\omega^2$ is a unit
at the unique place over $3$, $\mathtt{S}$ is contained precisely 
in the maximal orders corresponding to vertices in a path $\mathfrak{p}$
having the roots of $z=\frac1{z\omega}$, i.e., $\pm\omega$, 
as visual limits. On the other hand, the maximal orders
containing $\mathtt{R}$ are those corresponding to vertices
whose distance to the standard apartment $\mathfrak{a}_0$ 
is the valuation of $\omega-\omega^2=\omega(1-\omega)$, 
which is a uniformizer. We conclude that the maximal orders
containing $\mathtt{R}$ are those corresponding to vertices
in $\mathfrak{a}_0$ or their neighbors. Since $\mathfrak{p}$
and $\mathfrak{a}_0$ meet at a single vertex, we conclude that
$\phi(H)$ is contained in precisely $3$ maximal orders,
the order $\mathfrak{D}_0=\mathbb{M}_2(\mathcal{O}_K)$ 
corresponding to the intersection, and the maximal orders
corresponding to the neighbors at either side in the path
$\mathfrak{p}$. This shows that the condition $r=N$ cannot be eliminated
from Th. \ref{t1}.
\end{ex}
 
\begin{ex}\label{ex55}
    Consider the symmetric group $G=S_p$, where $p\geq3$ a prime number, 
    and consider the representation $\phi:S_p\rightarrow GL_p(\mathbb{Q})$
    defined by $\phi(\sigma)(\mathbf{e}_i)=\mathbf{e}_{\sigma(i)}$, on 
    the canonical basis $\{\mathbf{e}_1,\dots,\mathbf{e}_p\}$ of 
    $\mathbb{Q}^p$. The subspace $W$ generated by the vector 
    $\mathbf{u}=\mathbf{e}_1+\cdots +\mathbf{e}_p$ is invariant, so we can 
    decompose $\phi$ as a direct sum of two representations with one of them, 
    denoted $\phi_1$, defined on a $(p-1)$-dimensional space generated by $p$
    vectors $\bar{\mathbf{e}}_i$ with zero sum.  Now consider a primitive p-th root of
    unity $\eta_p$, and note that we have a surjective linear transformation 
    from $\mathbb{Q}^{p}$ to $\mathbb{Q}(\eta_p)$ given by 
    $\mathbf{e}_i\mapsto \eta_p^{i}$, whose kernel is the subspace $W$. This 
    defines a representation that we can 
    identify with $\phi_1$, and we do so in the sequel.  Consider the 
    bijection $\eta_p^{s}\mapsto\eta_p^{rs}$, where $r$ is a 
    generator of $\mathbb{F}_p^*$. This extends to a linear map on 
    $\mathbb{Q}(\eta_p)$ having the form $\phi_1(\lambda)$, 
    where $\lambda\in S_p$ is a $(p-1)$-cycle fixing only $p$, as 
    $\eta_p^{p}=1$ is Galois-invariant. On the other hand, the cycle 
    $\rho=(1\;2\;3\;\dots\;p)$ 
    corresponds to the bijection 
    $\phi_1(\rho)\big(\eta_p^{s}\big)=\eta_p^{s+1}$, which has order $p$.
    Note that $\phi_1$ is faithful, as so is $\phi$, so the computation
    $$\phi_1\big(\lambda\circ\rho\circ\lambda^{-1}\big)
    \left(\eta_p^{s}\right)
    =\phi_1\big(\lambda\circ\rho\big)\left(\eta_p^{sr^{-1}}\right)
    =\phi_1(\lambda)\left(\eta_p^{sr^{-1}+1}\right)
    =\eta_p^{s+r}=\phi_1\left(\rho^{r}\right)(\eta_p^s)$$ 
    shows that the group $H=\langle \lambda,\rho\rangle$ generated by 
    $\lambda$ and $\rho$ is isomorphic to $H(p-1,p,r)$. Furthermore, the 
    restriction of the representation $\phi_1$ to the group 
    $H$ is irreducible by a straightforward application 
    of Lemma \ref{l38}. In fact,
    $\phi_1(\lambda)$ is a generator of the Galois group and $\phi_1(\rho)$
    is multiplication by a generator of the degree $p-1$ extension
    $\mathbb{Q}(\eta)/\mathbb{Q}$. In particular, Th. \ref{t0} applies
    and we can compute the number of integral representations from the 
    number of invariant subspaces of the residual representation 
    at the only
    place that ramifies at $\mathbb{Q}(\eta_p)/\mathbb{Q}$, namely $p$. 

    Let us denote the residual representation by $\bar{\phi}_1$.
    Since $p$ is totally ramified for $\mathbb{Q}(\eta_p)/\mathbb{Q}$,
    the $\bar{\phi}_1(H)$ invariant subspaces lie in a maximal flag.
    More precisely, if $\wp=(\eta_p-1)\mathbb{Z}[\eta_p]$ denotes 
    the unique prime ideal over $p$, then the invariant subspaces
    are the reductions modulo $p$ of the powers of $\wp$, namely
    $\bar{\wp},\bar{\wp}^{2},\dots,\bar{\wp}^{p-2}$. 
    Note that $\bar{\wp}^{p-1}=\{0\}$.
    
    Now, in order to compute the set of maximal orders containing 
    $\phi_1(S_p)$, we have to study each of the subspaces 
    $\bar{\wp}^{l}$ and 
    check if they are invariant under the whole action of $S_p$. 
    Note that the
    transposition $\tau=(1\;p)$ satisfies 
    $S_p=\langle\rho,\tau\rangle$. We 
    already know that $\bar{\phi}_1(\rho)$ leaves invariant all spaces 
    $\bar{\wp}^{l}$, so we only need to study $\bar{\phi}_1(\tau)$. 
    Note that $\tau$ acts on $\mathbb{Q}(\eta_p)$ as follows: 
    $$\phi_1(\tau)(\alpha_1\eta_p+\alpha_2\eta_p^2+\dots+
    \alpha_{p-1}\eta_p^{p-1}+\alpha_p\eta_p^p)=
    \alpha_p\eta_p+\alpha_2\eta_p^2+
    \dots+\alpha_{p-1}\eta_p^{p-1}+\alpha_1\eta_p^p.$$
    Let $\epsilon$ denote the reduction modulo $p$ of $\eta_p-1$. 
    Then $\{\epsilon^l,\dots,\epsilon^{p-2}\}$ is a basis
    of $\bar{\wp}^l$. 
    Note that for $2\leq l \leq p-1$ we have
    $$\phi_1(\tau)\left((\eta_p-1)^l\right)=\phi_1(\tau)
    \left(\sum_{k=0}^{l}\binom{l}{k}\eta_p^{k}(-1)^{l-k}\right)=
    (-1)^l\eta_p+l(-1)^{l-1}+
    \sum_{k=2}^{l}\binom{l}{k}\eta_p^{k}(-1)^{l-k}$$
    $$=(\eta_p-1)^{l}+(-1)^{l}(\eta_p-1)+l(-1)^{l-1}(1-\eta_p).$$
    We conclude that $\bar{\phi}_1(\tau)\left(\epsilon^l\right)=\epsilon^l
    +(-1)^l(l+1)\epsilon$. In particular, the element 
    $\bar{\phi}_1(\tau)\left(\epsilon^{l}\right)
    =\epsilon^l+(-1)^l(l+1)\epsilon$ is a local generator
    of the maximal ideal
    $\bar{\wp}$, and therefore it does not belong to 
    $\bar{\wp}^l$. The latter
    is not, therefore, an invariant subspace. On the other hand, 
    the space $\bar{\wp}$ is invariant since 
    $\bar{\phi}_1(\tau)(\epsilon)=-\epsilon$,
    and the image of $\epsilon^{l}$ is also in this ideal 
    as previously seen.
    We conclude that there are exactly two integral representations. 
\end{ex}

\begin{ex}\label{ex56}
    The group $H=H(3,7,2)$ in Ex. \ref{e0} embeds into the simple group 
    $PSL_2(7)$. Klein's three-dimensional representation $\phi$
    of $PSL_2(7)$ is faithful, and so must be, therefore, its restriction 
    $\phi_H$ to $H$. We conclude that this restriction is irreducible. The
    restriction of $\phi_H$ is defined over the quadratic  field
    $K=\mathbb{Q}(\sqrt{-7})$. We claim that $\phi$ is also defined 
    over $K$. Since $PSL_2(7)$ has only two three-dimensional representations, 
    it suffices to see that the Schur index is $1$. If not, there should be a 
    faithful representation of $PSL_2(7)$ with images on a $9$-dimensional 
    division $K$-algebra, but then the same would hold for $H$, and this is not
    the case since the Schur index for the latter is $1$. This proves the claim.
    Since $K$ has class number $1$, then $\phi$ has an integral form. We conclude 
    that Theorem \ref{t0} applies to $\phi$.
\end{ex}

'

\end{document}